\newtheorem{theorem}{Theorem}
\newtheorem{lemma}[theorem]{Lemma}
\newtheorem{definition}[theorem]{Definition}
\newtheorem{corollary}[theorem]{Corollary}
\newtheorem{example}[theorem]{Example}
\DeclarePairedDelimiter{\floor}{\lfloor}{\rfloor}
\DeclarePairedDelimiter{\ceil}{\lceil}{\rceil}
\title{On the $\delta$-chromatic numbers of the Cartesian products of graphs}
\author{Wipawee Tangjai}
\address{Department of Mathematics, Faculty of Science, Mahasarakham University, Maha Sarakham 44150, Thailand}
\email{wipawee.t@msu.ac.th}
\author{Witsarut Pho-on}
\address{Department of Mathematics, Faculty of Science, Srinakharinwirot University, Sukhumvit 23, 10110 Bangkok, Thailand}
\email{witsarut@g.swu.ac.th}
\author{Panupong Vichitkunakorn*}
\address{Division of Computational Science, Faculty of Science, Prince of Songkla University, Songkla 90110, Thailand}
\email{panupong.v@psu.ac.th}
\begin{document}
\maketitle
\begin{abstract}
In this work, we study the $\delta$-chromatic number of a graph which is the chromatic number of the $\delta$-complement of a graph. 
We give a structure of the $\delta$-complements and sharp bounds on the $\delta$-chromatic numbers of the Cartesian products of graphs.
Furthermore, we compute the $\delta$-chromatic numbers of various classes of Cartesian product graphs, including the Cartesian products between cycles, paths, and stars.

\end{abstract}
\textbf{Keyword:}
delta-complement graph, chromatic number, Cartesian product,  coloring
\\
\textbf{MSC:} 05C07, 05C15, 05C35, 05C38, 05C69


\section{Introduction}
The concept of $\delta$-complement was introduced in 2022 \cite{math10081203}. Their research focused on exploring various intriguing characteristics of these graphs, including properties like $\delta$-self-complementary, adjacency, and hamiltonicity.
In 2023, Vichitkunakorn et al. \cite{Vichitkunakorn2023} introduced the term $\delta$-chromatic number of a graph $G$ which refers to the chromatic number of the $\delta$-complement of $G$.
They established a Nordhaus-Gaddum bound type relation between the chromatic number and the $\delta$-chromatic number across various parameters: the clique number, the number of vertices and the degrees of vertices.
The given bounds are sharp and the classes of graphs satisfying those bounds are given
\cite{Vichitkunakorn2023}.
In this study, we present a more detailed outcome concerning the $\delta$-chromatic number of the Cartesian product of graphs.

In 1957, Sabidussi \cite{sabidussi_1957} showed that the chromatic number of the Cartesian product graphs is equal to the maximum chromatic number between such two graphs.
A lot of subsequent research has been exploring different types of chromatic numbers of the Cartesian product graphs such as list chromatic number \cite{BOROWIECKI20061955}, packing chromatic number \cite{BRESAR20072303} and $b$-chromatic number \cite{Balakrishnan2014,GUO201882}.

We first recall some basic notations and definitions needed in this article. 
Let $G$ be a graph.
For a subset $U$ of $V(G)$, $G[U]$ denotes the subgraph induced by $U$. 
A vertex coloring $c$ of $G$ is a \textit{proper coloring} if each pair of adjacent vertices has distinct colors.
The \textit{chromatic number} of $G$, denoted by $\chi(G)$, is the minimum number of colors needed so that $(G,c)$ is properly colored.
For each vertex $u\in V(G)$, we use notation $d_G(u)$ for the degree of $u$ in $G$.
Throughout this article, we let $P_n$ be a path with $n$ vertices, $K_n$ be a complete graph with $n$ vertices and $C_n$ be a cycle with $n$ vertices.
We let $S_{1,n}$ be a star with $n$ pendants.
For graphs $G$ and $H$, the \textit{Cartesian product} of $G$ and $H$, denoted by $G\square H$, is a graph where $V(G\square H)=V(G)\times V(H)$ and $uv\in E(G\square H)$ if either $x=x'$ and $yy'\in E(H)$ or $y=y'$ and $xx'\in E(G)$ for $u=(x,y)$ and $v=(x',y')$.

In this work, we give a structure of the $\delta$-complement of the finite Cartesian products of graphs.
Sharp bounds on the $\delta$-chromatic number (the chromatic number of $\delta$-complement) of the finite Cartesian products of graphs are also given.
In addition, we determine the specific value of the $\delta$-chromatic numbers of various classes of the Cartesian product of well-known graphs such as cycle, path, and star.

\section{Preliminary results}
In this section, we review some basic definitions and previous results.

\begin{definition}[\cite{math10081203}] \normalfont 
The \textit{$\delta$-complement} of a graph $G$, denoted $G_\delta$, is a graph obtained from $G$ by using the same vertex set and the following edge conditions: $uv\in E(G_\delta)$ if
\begin{enumerate}
    \item  $d(u)=d(v)$ in $G$ and $uv\notin E(G)$, or
    \item $d(u)\neq d(v)$ in $G$ and $uv\in E(G)$.
\end{enumerate}
\end{definition}

\begin{definition}[\cite{Vichitkunakorn2023}] A $\delta$-\textit{chromatic number} $\chi_\delta(G)$ of a graph $G$ is the chromatic number of $G_{\delta}$.
\end{definition}

Results on the $\delta$-chromatic numbers of some important graphs are
$\chi_\delta(P_n) = \ceil*{\frac{n-2}{2}}$ for $n\geq 5$ \cite{Vichitkunakorn2023},  $\chi_\delta(C_n) = \ceil*{\frac{n}{2}}$ \cite{Vichitkunakorn2023}, and
$\chi_\delta(W_n) = 1 + \chi_\delta(C_n) = 1 + \ceil*{\frac{n}{2}}$.

\begin{theorem}[\cite{sabidussi_1957}]
\label{thm: chiProduct}
    Let $G$ and $H$ be graphs. We have $\chi(G\square H)=\max\{\chi(G),\chi(H)\}$.
\end{theorem}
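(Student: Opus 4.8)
The plan is to prove the two inequalities $\chi(G\square H)\ge \max\{\chi(G),\chi(H)\}$ and $\chi(G\square H)\le \max\{\chi(G),\chi(H)\}$ separately; throughout, set $k=\max\{\chi(G),\chi(H)\}$.

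For the lower bound, I would observe that $G\square H$ contains induced copies of both $G$ and $H$. Fixing any vertex $y_0\in V(H)$, the set $\{(x,y_0):x\in V(G)\}$ induces a subgraph isomorphic to $G$, since by the definition of the Cartesian product $(x,y_0)(x',y_0)$ is an edge of $G\square H$ exactly when $xx'\in E(G)$; symmetrically, fixing $x_0\in V(G)$ yields an induced copy of $H$. Because the chromatic number cannot increase when passing to a subgraph, $\chi(G\square H)\ge\chi(G)$ and $\chi(G\square H)\ge\chi(H)$, and hence $\chi(G\square H)\ge k$.

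For the upper bound, I would exhibit an explicit proper coloring of $G\square H$ with $k$ colors. Choose proper colorings $c_G\colon V(G)\to\{0,1,\dots,k-1\}$ and $c_H\colon V(H)\to\{0,1,\dots,k-1\}$; these exist because $\chi(G),\chi(H)\le k$, so both colorings can be taken to use colors from the common palette $\mathbb{Z}_k$. Define $c(x,y)=c_G(x)+c_H(y)\pmod{k}$. To verify that $c$ is proper, I would check the two kinds of edges. If $(x,y)(x,y')$ is an edge with $yy'\in E(H)$, then $c(x,y)-c(x,y')\equiv c_H(y)-c_H(y')\pmod{k}$, which is nonzero because $c_H(y)$ and $c_H(y')$ are distinct elements of $\{0,\dots,k-1\}$ and therefore distinct modulo $k$; the case $y=y'$ with $xx'\in E(G)$ is symmetric. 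Thus $c$ is a proper $k$-coloring and $\chi(G\square H)\le k$.

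The step requiring the most care is the modular argument in the upper bound: the construction $c_G(x)+c_H(y)\bmod k$ is proper precisely because both factor colorings draw from the single range $\{0,\dots,k-1\}$ of size $k$, so that the difference of two distinct color values can never vanish modulo $k$. Once this alignment of palettes is in place, the verification of properness is immediate, and combining the two inequalities yields $\chi(G\square H)=k=\max\{\chi(G),\chi(H)\}$.
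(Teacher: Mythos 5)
Your proof is correct, and it is the classical argument for Sabidussi's theorem. Note that the paper itself does not prove this statement at all: it is quoted as Theorem~\ref{thm: chiProduct} with a citation to Sabidussi (1957) and used as a black box, so there is no internal proof to compare against. Your two halves are exactly the standard ones: the lower bound via the induced $G$-layer $\{(x,y_0):x\in V(G)\}$ and $H$-layer, and the upper bound via the coloring $c(x,y)=c_G(x)+c_H(y)\bmod k$ with both palettes taken inside $\{0,\dots,k-1\}$, where properness follows because the difference of two distinct residues in that range cannot vanish modulo $k$. Both steps are carried out correctly, so the argument stands as a complete, self-contained proof of the cited result.
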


\begin{theorem}[\cite{Vichitkunakorn2023}]
\label{thm: NG_bound}
    For $n\geq 4$, let $G$ be a graph with $n$ vertices. Let $d_1,\dots,d_m$ be all distinct values of the degrees of the vertices in $G$. Partition $V(G)$ into non-empty sets $V_{d_1},V_{d_2},\dots, V_{d_m}$. We have 
    \[
    \max_{1\leq i\leq m}\{|V_{d_i}|\}\leq \chi(G)\cdot \chi_{\delta}(G)\leq \left(\frac{m+n}{2}\right)^2
    \]
    and 
    \[
    2\cdot \sqrt{\max_{1\leq i\leq m} \{|V_{d_i}|\}}\leq \chi(G)+\chi_{\delta}(G)
\leq m+n.
    \]
\end{theorem}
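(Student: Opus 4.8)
The plan is to reduce everything to the classical Nordhaus--Gaddum inequalities applied \emph{separately} on each degree class, after first pinning down how $G_\delta$ behaves inside and between the classes $V_{d_1},\dots,V_{d_m}$. The key structural observation I would establish first is this: if $u,v\in V_{d_i}$ lie in the same class then $d(u)=d(v)$, so by condition (1) of the definition $uv\in E(G_\delta)$ exactly when $uv\notin E(G)$; hence the induced subgraph satisfies $G_\delta[V_{d_i}]=\overline{G[V_{d_i}]}$, the ordinary complement. (Between two distinct classes one instead keeps the edges of $G$, by condition (2), but this will not be needed below.) Once this identity is in hand, the single-class complement structure together with monotonicity of $\chi$ under induced subgraphs does the rest.

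For the lower bounds I would pick an index $i^\ast$ with $|V_{d_{i^\ast}}|=\max_i|V_{d_i}|$ and write $H=G[V_{d_{i^\ast}}]$. Monotonicity gives $\chi(G)\ge\chi(H)$ and, using the identity above, $\chi_\delta(G)=\chi(G_\delta)\ge\chi(G_\delta[V_{d_{i^\ast}}])=\chi(\overline H)$. The classical bound $|V(H)|\le\chi(H)\,\chi(\overline H)$ — each color class of $H$ is a clique of $\overline H$, hence has at most $\chi(\overline H)$ vertices — then yields $\max_i|V_{d_i}|=|V(H)|\le\chi(H)\chi(\overline H)\le\chi(G)\chi_\delta(G)$, which is the product lower bound. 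The sum lower bound is immediate from this by AM--GM: $\chi(G)+\chi_\delta(G)\ge 2\sqrt{\chi(G)\chi_\delta(G)}\ge 2\sqrt{\max_i|V_{d_i}|}$.

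For the upper bounds the idea is to color each class with its own disjoint palette. Coloring $V_{d_i}$ optimally inside $G[V_{d_i}]$ with a set of colors disjoint from the other classes produces a proper coloring of $G$ regardless of which edges run between classes, so $\chi(G)\le\sum_{i=1}^m\chi(G[V_{d_i}])$; the same disjoint-palette argument applied in $G_\delta$ gives $\chi_\delta(G)\le\sum_{i=1}^m\chi(\overline{G[V_{d_i}]})$. Adding these and invoking the classical Nordhaus--Gaddum sum bound $\chi(H)+\chi(\overline H)\le|V(H)|+1$ on each class gives $\chi(G)+\chi_\delta(G)\le\sum_{i=1}^m(|V_{d_i}|+1)=n+m$, the sum upper bound. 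The product upper bound follows by AM--GM in the other direction: $\chi(G)\chi_\delta(G)\le\big(\tfrac{\chi(G)+\chi_\delta(G)}{2}\big)^2\le\big(\tfrac{m+n}{2}\big)^2$.

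The substantive content is really just the structural identity $G_\delta[V_{d_i}]=\overline{G[V_{d_i}]}$ together with the two classical per-class inequalities; the main thing to be careful about is the upper bound, where one must verify that assigning disjoint palettes to distinct degree classes automatically handles every inter-class edge, so that no interaction between classes can force extra colors. I would also recall (or cite) the short inductive proof of $\chi(H)+\chi(\overline H)\le|V(H)|+1$: deleting a vertex $v$ and using $d_H(v)+d_{\overline H}(v)=|V(H)|-1$ shows that $\chi$ cannot increase in both $H$ and $\overline H$ when $v$ is added back.
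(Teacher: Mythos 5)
Your proposal is mathematically correct, but note that this theorem is not proved in the present paper at all: it is quoted as a preliminary result from the cited reference \cite{Vichitkunakorn2023}, so there is no in-paper proof to compare against. Judged on its own merits, your argument is sound and complete. The structural identity $G_\delta[V_{d_i}]=\overline{G[V_{d_i}]}$ follows immediately from condition (1) of the definition of the $\delta$-complement, since degrees are measured in $G$ itself and all vertices of $V_{d_i}$ share the degree $d_i$. With that identity, your lower bounds are valid: monotonicity of $\chi$ under induced subgraphs gives $\chi(G)\ge\chi(H)$ and $\chi_\delta(G)\ge\chi(\overline{H})$ for $H=G[V_{d_{i^\ast}}]$, and the classical inequality $|V(H)|\le\chi(H)\chi(\overline{H})$ plus AM--GM finishes both lower bounds. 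The disjoint-palette argument for $\chi(G)\le\sum_i\chi(G[V_{d_i}])$ and its $\delta$-analogue is also correct --- inter-class edges are automatically properly colored because the palettes are disjoint --- and combining with the Nordhaus--Gaddum sum bound $\chi(H)+\chi(\overline{H})\le|V(H)|+1$ per class yields $m+n$, with the product upper bound again by AM--GM. The only imprecision is your parenthetical sketch of the classical sum bound: it is not true that $\chi$ ``cannot increase in both'' $H$ and $\overline{H}$ when a vertex is restored; rather, if it does increase in both, then $d_H(v)\ge\chi(H-v)$ and $d_{\overline{H}}(v)\ge\chi(\overline{H}-v)$ force $\chi(H-v)+\chi(\overline{H}-v)\le|V(H)|-1$, so the bound still holds. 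Since that inequality is a standard cited result, this does not affect the validity of your proof.
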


\section{Structure of the $\delta$-complements of Cartesian products}
This section contains the structure of the $\delta$-complement of the Cartesian product of graphs.

The following theorem shows that the edge set of the $\delta$-complements of the Cartesian product contains the edge set of the Cartesian product of the $\delta$-complements of graphs.
It is a fundamental result that will be used throughout what follows.

\begin{theorem}\label{thm: cartesianEdge}
    For graphs $G$ and $H$, we have $(G\square H)_\delta=(V,E)$ where $V=V(G\square H)$ and $E=E(G_{\delta}\square H_{\delta})\cup S$ where $S=\{uv: u=(u_1,u_2)\in V(G\square H) \text{ and } v=(v_1,v_2)\in V(G\square H) \text{ where } u_1\neq v_1, u_2\neq v_2 \text{ and } d_{G\square H}(u)=d_{G\square H}(v)\}$.
\end{theorem}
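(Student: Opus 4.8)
The plan is to verify the claimed edge set by comparing, pair by pair, the adjacency rule defining $(G\square H)_\delta$ against the description $E(G_\delta\square H_\delta)\cup S$. The backbone of the argument is the standard degree identity for Cartesian products, namely $d_{G\square H}((x,y))=d_G(x)+d_H(y)$, which I would first record; it follows immediately from the definition of the product, since the neighbors of $(x,y)$ are exactly the pairs $(x',y)$ with $xx'\in E(G)$ together with the pairs $(x,y')$ with $yy'\in E(H)$.

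With this in hand, I would partition the set of unordered pairs of distinct vertices $u=(u_1,u_2)$, $v=(v_1,v_2)$ into three types according to their coordinates: \emph{vertical} pairs with $u_1=v_1$ (and hence $u_2\neq v_2$), \emph{horizontal} pairs with $u_2=v_2$ (and hence $u_1\neq v_1$), and \emph{diagonal} pairs with $u_1\neq v_1$ and $u_2\neq v_2$. Observe that $E(G_\delta\square H_\delta)$ contains only vertical and horizontal pairs, while $S$ contains only diagonal pairs; hence the three types can be analyzed independently, and the union $E(G_\delta\square H_\delta)\cup S$ is automatically disjoint.

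For diagonal pairs, the key remark is that $uv\notin E(G\square H)$ always holds, since a product edge must agree in one coordinate. Hence, by the first clause of the $\delta$-complement definition, a diagonal pair lies in $(G\square H)_\delta$ if and only if $d_{G\square H}(u)=d_{G\square H}(v)$, which is precisely the defining condition of $S$. For vertical pairs, with $u=(x,y)$ and $v=(x,y')$, the common summand $d_G(x)$ cancels, so $d_{G\square H}(u)=d_{G\square H}(v)$ is equivalent to $d_H(y)=d_H(y')$; moreover $uv\in E(G\square H)$ is equivalent to $yy'\in E(H)$. Substituting these equivalences into the two clauses of the $\delta$-complement definition shows that $uv\in E((G\square H)_\delta)$ if and only if $yy'\in E(H_\delta)$, i.e. if and only if $uv$ is a vertical edge of $G_\delta\square H_\delta$. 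The horizontal case is identical after interchanging the roles of $G$ and $H$, using that both the Cartesian product and the $\delta$-complement construction are symmetric in the two factors.

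I do not anticipate a serious obstacle: once the degree identity and the three-way coordinate partition are set up, each case reduces to a short logical equivalence. The only point requiring care is the degree-cancellation step for vertical and horizontal pairs, where one must check that the two clauses of the $\delta$-complement definition really do transfer to the corresponding clauses for $H_\delta$ (respectively $G_\delta$); keeping the ``equal degree, non-edge'' and ``unequal degree, edge'' cases explicitly parallel will make this transparent. Collecting the three cases then yields $E((G\square H)_\delta)=E(G_\delta\square H_\delta)\cup S$, as claimed.
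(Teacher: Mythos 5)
Your proof is correct, and it is organized along a genuinely different decomposition from the paper's. The paper argues by double containment: for the forward inclusion it splits according to the two clauses of the $\delta$-complement definition (edge with unequal degrees / non-edge with equal degrees), and for the backward inclusion it splits according to membership in $E(G_\delta\square H_\delta)$ versus $S$. You instead partition the unordered pairs of distinct vertices by coordinate pattern (vertical, horizontal, diagonal) and prove a single biconditional in each class, using the same two ingredients (the degree identity $d_{G\square H}(x,y)=d_G(x)+d_H(y)$ and the fact that a product edge agrees in exactly one coordinate). Your organization buys something concrete: it is airtight where the paper's case analysis is not. In the paper's forward direction, the case $uv\notin E(G\square H)$ with $d_{G\square H}(u)=d_{G\square H}(v)$ is asserted to force $u_1\neq v_1$ and $u_2\neq v_2$, hence $uv\in S$; this is false in general --- for example, $u=(a,1)$ and $v=(a,3)$ in $K_2\square P_3$ are non-adjacent with equal degrees but share their first coordinate. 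Such pairs in fact land in $E(G_\delta\square H_\delta)$ rather than $S$ (so the theorem itself is unharmed), and your vertical/horizontal cases capture them correctly, since there the degree cancellation converts the two $\delta$-complement clauses into exactly the two clauses defining $E(H_\delta)$ (respectively $E(G_\delta)$). In exchange, the paper's route keeps the two directions of the set equality separately visible, but at essentially equal length your equivalence-per-class argument both closes this gap and makes the disjointness of $E(G_\delta\square H_\delta)$ and $S$ explicit.
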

\begin{proof}
$(\Longrightarrow)$ 
Let $u=(u_1,u_2)$ and $v=(v_1,v_2)$ be distinct vertices in $(G\square H)_\delta$ where $uv\in E((G\square H)_\delta)$.
    It follows that either 
    \begin{itemize}
        \item $uv\in E(G\square H)$ and $d_{G\square H}(u)\neq d_{G\square H}(v)$, or
        \item $uv\not\in E(G\square H)$ and $d_{G\square H}(u)=d_{G\square H}(v)$.
    \end{itemize}
    
In case $uv\in E(G\square H)$ and $d_{G\square H}(u)\neq d_{G\square H}(v)$, without loss of generality, we suppose that $u_1=v_1$, $u_2\neq v_2$ and $u_2v_2\in E(H)$.
Since $d_G(u_1)=d_G(v_1)$, it follows that $d_H(u_2)\neq d_H(v_2)$.
Thus $u_2v_2\in E(H_{\delta})$.
Hence $uv\in E(G_{\delta}\square H_{\delta})$.
In case $uv\not\in E(G\square H)$ and $d_{G\square H}(u)=d_{G\square H}(v)$, we have $u_1\neq v_1$ and $u_2\neq v_2$.
Thus $uv\in S$.

$(\Longleftarrow)$ Let $uv\in E(G_{\delta}\square H_{\delta})\cup S$.
Consider $uv\in E(G_{\delta}\square H_{\delta})$.
Without loss of generality, we suppose that $u_1=v_1$, $u_2\neq v_2$ and $u_2v_2\in E(H_{\delta})$.
If $d_{G\square H}(u)=d_{G\square H}(v)$, then $d_H(u_2)=d_H(v_2)$.
Thus $u_2v_2\not\in E(H)$.
Hence $uv\not\in E(G\square H)$.
Since $d_{G\square H}(u)=d_{G\square H}(v)$, we have $uv\in E((G\square H)_\delta)$.
If $d_{G\square H}(u)\neq d_{G\square H}(v)$, then $d_H(u_2)\neq d_H(v_2)$.
Thus $u_2v_2\in E(H)$ and $uv\in E(G\square H)$.
Since $d_{G\square H}(u)\neq d_{G\square H}(v)$, we have $uv\in E((G\square H)_\delta)$.
Now, we consider $uv\in S$.
We have that $u_1\neq v_1$ and $u_2\neq v_2$. So $uv\notin E(G\square H)$.
Since $d_{G\square H}(u)=d_{G\square H}(v)$, it follows that $uv\in E((G\square H)_\delta)$.
\end{proof}

\begin{corollary}
    Let $G$ and $H$ be graphs.
    We have $(G\square H)_\delta=G_{\delta}\square H_{\delta}$ if and only if
    for any $u=(u_1,u_2)$ and $v=(v_1,v_2)$ in $V(G\square H)$ where $u_1\neq v_1$ and $u_2\neq v_2$, we have $d_{G\square H}(u) \neq d_{G\square H}(v)$.
\end{corollary}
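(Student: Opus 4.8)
The plan is to read the statement off directly from Theorem~\ref{thm: cartesianEdge}, which already decomposes the edge set of $(G\square H)_\delta$ as $E(G_{\delta}\square H_{\delta})\cup S$. Since $(G\square H)_\delta$ and $G_{\delta}\square H_{\delta}$ carry the same vertex set $V(G\square H)$, the graph equality $(G\square H)_\delta=G_{\delta}\square H_{\delta}$ is equivalent to the equality of their edge sets, that is, to $E(G_{\delta}\square H_{\delta})\cup S=E(G_{\delta}\square H_{\delta})$. This in turn is equivalent to the containment $S\subseteq E(G_{\delta}\square H_{\delta})$, so the whole problem reduces to understanding when this containment holds.

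The key observation that drives the argument is that $S$ and $E(G_{\delta}\square H_{\delta})$ are always disjoint, so the containment $S\subseteq E(G_{\delta}\square H_{\delta})$ can hold only in the degenerate case $S=\emptyset$. To verify the disjointness I would appeal to the definition of the Cartesian product: every edge of $G_{\delta}\square H_{\delta}$ joins two vertices $u=(u_1,u_2)$ and $v=(v_1,v_2)$ that agree in exactly one coordinate (either $u_1=v_1$ or $u_2=v_2$), whereas by construction every edge of $S$ joins vertices with $u_1\neq v_1$ and $u_2\neq v_2$. These two conditions are mutually exclusive, hence $S\cap E(G_{\delta}\square H_{\delta})=\emptyset$, and therefore $S\subseteq E(G_{\delta}\square H_{\delta})$ if and only if $S=\emptyset$.

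It then remains to translate $S=\emptyset$ into the condition stated in the corollary. By the definition of $S$ in Theorem~\ref{thm: cartesianEdge}, the set $S$ is empty precisely when there is no pair $u=(u_1,u_2)$, $v=(v_1,v_2)$ with $u_1\neq v_1$, $u_2\neq v_2$, and $d_{G\square H}(u)=d_{G\square H}(v)$; equivalently, every such pair satisfies $d_{G\square H}(u)\neq d_{G\square H}(v)$. This is exactly the stated hypothesis, so both directions of the equivalence follow at once.

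I do not anticipate any genuine obstacle, since essentially all of the content is already packaged in Theorem~\ref{thm: cartesianEdge}. The only point requiring a moment's care is the disjointness of $S$ and $E(G_{\delta}\square H_{\delta})$, which is precisely what upgrades the tautological implication $S=\emptyset\Rightarrow S\subseteq E(G_{\delta}\square H_{\delta})$ into a true equivalence; without it one could only conclude that the stated condition is sufficient, not necessary. Once that disjointness is recorded, the corollary is a one-line consequence.
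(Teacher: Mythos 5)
Your proposal is correct and follows exactly the route the paper intends: the corollary is stated without proof as an immediate consequence of Theorem~\ref{thm: cartesianEdge}, namely that equality of the two graphs holds precisely when $S=\emptyset$. Your one extra point --- that $S$ is disjoint from $E(G_\delta\square H_\delta)$ because Cartesian-product edges agree in one coordinate while edges of $S$ differ in both --- is exactly the observation the paper leaves implicit, and recording it is what makes the equivalence (rather than just one direction) rigorous.
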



In general, we have the following theorem for a finite Cartesian product of graphs.
\begin{theorem}\label{thm: cartesianEdge_super}
    For graphs $G_1,\dots,G_k$, we have $(G_1\square\cdots\square G_k)_\delta=(V,E)$ where $V=V(G_1\square\cdots\square G_k)$ and $E=E((G_1)_{\delta}\square\cdots\square (G_k)_{\delta})\cup S$ 
    such that $S$ is the set of $uv$ where $u=(u_1,\dots,u_k)\in V$, $v=(v_1,\dots,v_k)\in V$, there are at least two indices $i$ that $u_i\neq v_i$, and $d_{G_1\square\cdots\square G_k}(u)=d_{G_1\square\cdots\square G_k}(v)$.
\end{theorem}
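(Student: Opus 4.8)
The plan is to establish the two set inclusions $E((G_1\square\cdots\square G_k)_\delta)\subseteq E((G_1)_\delta\square\cdots\square(G_k)_\delta)\cup S$ and its reverse, mirroring the structure of the proof of Theorem \ref{thm: cartesianEdge}. Two ingredients about the Cartesian product drive everything. First, two distinct vertices $u=(u_1,\dots,u_k)$ and $v=(v_1,\dots,v_k)$ are adjacent in $G_1\square\cdots\square G_k$ if and only if they differ in exactly one coordinate $i$ and $u_iv_i\in E(G_i)$. Second, the degree is additive across factors: $d_{G_1\square\cdots\square G_k}(u)=\sum_{j=1}^k d_{G_j}(u_j)$; for brevity write $d$ for $d_{G_1\square\cdots\square G_k}$. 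The key consequence, which I would isolate as the central observation, is that when $u$ and $v$ differ in exactly one coordinate $i$, all other factor-degrees cancel, so $d(u)=d(v)$ holds if and only if $d_{G_i}(u_i)=d_{G_i}(v_i)$. This is precisely what lets me translate the global $\delta$-condition into a single-factor $\delta$-condition.

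For the forward inclusion, I would take $uv\in E((G_1\square\cdots\square G_k)_\delta)$ and split on the two defining cases of the $\delta$-complement. If $uv\in E(G_1\square\cdots\square G_k)$ with $d(u)\neq d(v)$, then $u,v$ differ in exactly one coordinate $i$ with $u_iv_i\in E(G_i)$; by the central observation the degree inequality forces $d_{G_i}(u_i)\neq d_{G_i}(v_i)$, so $u_iv_i\in E((G_i)_\delta)$ and hence $uv\in E((G_1)_\delta\square\cdots\square(G_k)_\delta)$. If instead $uv\notin E(G_1\square\cdots\square G_k)$ with $d(u)=d(v)$, then either $u,v$ differ in at least two coordinates, placing $uv$ in $S$, or they differ in exactly one coordinate $i$ with $u_iv_i\notin E(G_i)$, in which case the degree equality gives $d_{G_i}(u_i)=d_{G_i}(v_i)$, so $u_iv_i\in E((G_i)_\delta)$ and again $uv\in E((G_1)_\delta\square\cdots\square(G_k)_\delta)$.

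The reverse inclusion runs symmetrically. An edge of $(G_1)_\delta\square\cdots\square(G_k)_\delta$ differs in exactly one coordinate $i$ with $u_iv_i\in E((G_i)_\delta)$; unwinding the two cases defining $(G_i)_\delta$ and reapplying the central observation shows that $uv$ satisfies one of the two defining conditions of $(G_1\square\cdots\square G_k)_\delta$. An edge of $S$ differs in at least two coordinates, hence is a non-edge of the product, and has equal degree, so it satisfies the non-edge condition of the $\delta$-complement.

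Alternatively, one can run an induction on $k$ with base case Theorem \ref{thm: cartesianEdge}, writing $G_1\square\cdots\square G_k=(G_1\square\cdots\square G_{k-1})\square G_k$ and applying the binary result. I expect this route to be the more delicate one, and the main obstacle to be reconciling the degree conditions: the inductive hypothesis describes $(G_1\square\cdots\square G_{k-1})_\delta$ through its own correction set, whose membership is defined by equality of degrees in the $(k-1)$-fold product, whereas the final statement needs equality of degrees in the full $k$-fold product. The reconciliation rests on the fact that two vertices sharing the same last coordinate have equal degree in $G_1\square\cdots\square G_{k-1}$ if and only if they have equal degree in $G_1\square\cdots\square G_k$, together with checking that the layer-copies of the inductive correction set and the binary correction term assemble to exactly the set $S$ of all pairs differing in at least two coordinates. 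Because this bookkeeping is somewhat fiddly, I would favour the direct double-inclusion argument above.
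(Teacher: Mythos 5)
Your proposal is correct and takes essentially the same approach as the paper: the paper's proof simply cites the product-adjacency characterization and states that the argument of Theorem~\ref{thm: cartesianEdge} carries over, and your double-inclusion argument is precisely that argument carried out in full, with degree additivity across factors providing the translation between the global degree condition and the single-factor $\delta$-condition. In fact your handling of the non-edge, equal-degree case (splitting into ``differ in at least two coordinates,'' which lands in $S$, versus ``differ in exactly one coordinate at a non-edge of that factor,'' which lands in $E((G_1)_\delta\square\cdots\square(G_k)_\delta)$) is more careful than the paper's own binary proof, which asserts that such a pair must differ in both coordinates and thereby overlooks the second subcase.
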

\begin{proof}
    It is well-known that two vertices $u=(u_1,\dots,u_k)$ and $(v_1,\dots,v_k)$ in $G_1\square\dots\square G_k$ are adjacent if and only if there is exactly one $i$ such that $u_i \neq v_i$ and $u_iv_i \in E(G_i)$.
    The rest of the proof follows similar arguments as in Theorem~\ref{thm: cartesianEdge}.
\end{proof}

The following three results are applications of Theorem~\ref{thm: cartesianEdge_super}.
\begin{theorem}
     $(G_1\square\cdots\square G_k)_\delta=(G_1)_{\delta}\square\cdots\square (G_k)_{\delta}$ if and only if there are at most one $i$ such that $G_i \neq K_1$. 
\end{theorem}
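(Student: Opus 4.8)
The plan is to reduce the equality of the two graphs to the vanishing of the set $S$ from Theorem~\ref{thm: cartesianEdge_super}, and then to detect a nonempty $S$ by a pigeonhole argument on degrees. Write $P=G_1\square\cdots\square G_k$ and $Q=(G_1)_\delta\square\cdots\square(G_k)_\delta$. By Theorem~\ref{thm: cartesianEdge_super} these two graphs share the vertex set $V$ and $E(P_\delta)=E(Q)\cup S$. First I would observe that every edge of a Cartesian product joins two vertices differing in exactly one coordinate, whereas every pair in $S$ differs in at least two coordinates; hence $E(Q)$ and $S$ are disjoint. Consequently $P_\delta=Q$ if and only if $S=\emptyset$, and the whole statement reduces to showing that $S=\emptyset$ precisely when at most one factor differs from $K_1$.

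For the easy direction, suppose at most one index $i$ has $G_i\neq K_1$. Then in each of the remaining coordinates the factor $K_1$ contributes only its unique vertex, so any two distinct vertices of $P$ can differ only in the single nontrivial coordinate. Thus no pair of vertices differs in two or more coordinates, so $S=\emptyset$ and $P_\delta=Q$.

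For the converse I would argue by contraposition: assume there are two distinct indices $a$ and $b$ with $G_a\neq K_1$ and $G_b\neq K_1$, so each of these factors has at least two vertices. The main tool is the additive degree formula $d_P(u_1,\dots,u_k)=\sum_{i=1}^k d_{G_i}(u_i)$ together with the classical pigeonhole fact that any graph on at least two vertices contains two distinct vertices of equal degree (the degree sequence cannot simultaneously realize $0$ and $n-1$). Applying this in $G_a$ gives distinct $p,q$ with $d_{G_a}(p)=d_{G_a}(q)$, and in $G_b$ gives distinct $r,s$ with $d_{G_b}(r)=d_{G_b}(s)$. Fixing all other coordinates at arbitrary vertices, I form $u$ and $v$ that agree everywhere except that $u$ carries $(p,r)$ and $v$ carries $(q,s)$ in coordinates $a$ and $b$. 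These differ in exactly the two coordinates $a$ and $b$, and the degree formula gives $d_P(u)=d_P(v)$; hence $uv\in S$, so $S\neq\emptyset$ and $P_\delta\neq Q$.

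The routine part is the reduction in the first paragraph and the bookkeeping of which coordinates change. The step I expect to carry the real content is producing a witness edge in $S$: the nonobvious ingredient is the pigeonhole principle guaranteeing repeated degrees inside each nontrivial factor, which lets me balance the two degree changes against each other. One should also confirm the convention that $G_i\neq K_1$ forces at least two vertices (ruling out empty factors), since the construction rests on having two vertices available to choose in both $G_a$ and $G_b$.
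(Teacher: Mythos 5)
Your proposal is correct and follows essentially the same route as the paper: reduce the equality to $S=\emptyset$ via Theorem~\ref{thm: cartesianEdge_super}, then exhibit a witness edge in $S$ when two factors are nontrivial by choosing equal-degree vertex pairs in each of those factors. The only difference is that you explicitly justify (via the pigeonhole fact that any graph on at least two vertices has two vertices of equal degree) a choice the paper makes without comment, and you also spell out the disjointness of $E((G_1)_\delta\square\cdots\square(G_k)_\delta)$ and $S$ needed for the reduction --- both welcome clarifications rather than deviations.
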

\begin{proof}
    From Theorem~\ref{thm: cartesianEdge_super}, we need to show that $S=\emptyset$ if and only if there are at most one $i$ such that $G_i \neq K_1$.
    
    Suppose that there are $i\neq j$ such that $G_i\neq K_1$ and $G_j\neq K_1$. 
    Choose $u=(u_1,\dots,u_k)$ and $v=(v_1,\dots,v_2)$ such that 
    $u_i\neq v_i$, $u_j \neq v_j$, $d_{G_i}(u_i)=d_{G_i}(v_i)$,  $d_{G_j}(u_j)=d_{G_j}(v_j)$ and $u_\ell = v_\ell$ for all $\ell \not\in \{i,j\}$.
    So $d_{G_1\square\cdots\square G_k}(u)=d_{G_1\square\cdots\square G_k}(v)$.
    Then $uv \in S$. Hence $S\neq \emptyset$.

    The converse is obvious.
\end{proof}

\begin{corollary}
     $(G\square H)_\delta=G_{\delta}\square H_{\delta}$ if and only if  $G=K_1$ or $H=K_1$.
\end{corollary}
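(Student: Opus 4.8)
The final statement is the corollary characterizing when $(G\square H)_\delta=G_{\delta}\square H_{\delta}$, namely that this holds if and only if $G=K_1$ or $H=K_1$. Since this is stated as a corollary immediately after the theorem for finite Cartesian products, the plan is to derive it directly from that theorem by specializing to the two-factor case $k=2$.

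The approach: The preceding theorem states that $(G_1\square\cdots\square G_k)_\delta=(G_1)_\delta\square\cdots\square(G_k)_\delta$ if and only if there is at most one index $i$ with $G_i\neq K_1$. First I would set $k=2$, $G_1=G$ and $G_2=H$ in that theorem. The condition ``at most one $i$ such that $G_i\neq K_1$'' then translates, with only two factors available, into the statement that at most one of $G,H$ differs from $K_1$. I would then observe that saying ``at most one of $G$ and $H$ is different from $K_1$'' is logically the same as saying ``at least one of $G$ and $H$ equals $K_1$,'' i.e. $G=K_1$ or $H=K_1$. This is the entire content of the corollary.

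The key steps, in order, are: (1) invoke the general theorem with $k=2$; (2) unpack the quantifier ``at most one $i$'' for a two-element index set into the concrete disjunction; (3) record that this disjunction is precisely $G=K_1$ or $H=K_1$, completing the equivalence. I expect there to be no genuine obstacle here, since the corollary is an immediate special case; the only point requiring a moment of care is the routine logical rewriting in step (2), ensuring the negation of ``both $G$ and $H$ are non-trivial'' is correctly phrased as the stated disjunction. Thus the proof is essentially a one-line citation of the previous theorem with the index set restricted to $\{1,2\}$.
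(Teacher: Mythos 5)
Your proposal is correct and matches the paper's approach exactly: the paper states this corollary without proof precisely because it is the immediate specialization of the preceding theorem to $k=2$, with ``at most one $i$ such that $G_i\neq K_1$'' unpacking to ``$G=K_1$ or $H=K_1$.'' Nothing further is needed.
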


\section{Bounds on the $\delta$-chromatic numbers of Cartesian products}

In this section, we provide some exact numbers and bounds on the $\delta$-chromatic numbers of some common graphs.

\begin{theorem}
Let $G_1,\dots,G_k$ be graphs. We have
    $$\max\{\chi_{\delta}(G_1),\dots,\chi_{\delta}(G_k)\}\leq \chi_{\delta}(G_1\square\cdots\square G_k).$$
\end{theorem}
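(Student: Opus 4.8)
The plan is to leverage Theorem~\ref{thm: cartesianEdge_super} to recognize $(G_1)_\delta \square \cdots \square (G_k)_\delta$ as a spanning subgraph of $(G_1 \square \cdots \square G_k)_\delta$, and then combine the monotonicity of the chromatic number under taking subgraphs with Sabidussi's theorem. The whole argument is short once the structural result is in hand.

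First I would invoke Theorem~\ref{thm: cartesianEdge_super}, which tells us that $(G_1 \square \cdots \square G_k)_\delta$ has the same vertex set as $(G_1)_\delta \square \cdots \square (G_k)_\delta$ and an edge set equal to $E((G_1)_\delta \square \cdots \square (G_k)_\delta) \cup S$. In particular, every edge of $(G_1)_\delta \square \cdots \square (G_k)_\delta$ is an edge of $(G_1 \square \cdots \square G_k)_\delta$, so the former is a spanning subgraph of the latter. Since any proper coloring of a graph restricts to a proper coloring of any subgraph on the same vertices, the chromatic number can only grow (or stay equal) when one passes from a subgraph to the full graph. Hence
\[
\chi\bigl((G_1)_\delta \square \cdots \square (G_k)_\delta\bigr) \leq \chi\bigl((G_1 \square \cdots \square G_k)_\delta\bigr) = \chi_\delta(G_1 \square \cdots \square G_k).
\]

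Next I would evaluate the left-hand side using Sabidussi's theorem (Theorem~\ref{thm: chiProduct}). That result is stated for two factors, so I would extend it to an arbitrary finite product by a routine induction on $k$, using the associativity of the Cartesian product to write $G_1 \square \cdots \square G_k = (G_1 \square \cdots \square G_{k-1}) \square G_k$; this yields $\chi((G_1)_\delta \square \cdots \square (G_k)_\delta) = \max\{\chi((G_1)_\delta), \dots, \chi((G_k)_\delta)\}$. By the definition of the $\delta$-chromatic number, $\chi((G_i)_\delta) = \chi_\delta(G_i)$ for each $i$, so the right-hand side is exactly $\max\{\chi_\delta(G_1), \dots, \chi_\delta(G_k)\}$. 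Chaining this equality with the inequality from the previous paragraph gives the claimed bound.

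There is no serious obstacle here; the entire content is the observation, supplied by Theorem~\ref{thm: cartesianEdge_super}, that the $\delta$-complement of a product dominates the product of $\delta$-complements edge-wise. The only step requiring any care is the passage from the two-factor form of Sabidussi's theorem to the $k$-factor form, which I would dispatch with a one-line induction rather than a detailed computation.
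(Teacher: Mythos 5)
Your proof is correct and follows exactly the paper's approach: the paper's own proof simply cites Theorem~\ref{thm: chiProduct} and Theorem~\ref{thm: cartesianEdge_super}, and your argument is the natural expansion of that citation, using the structure theorem to see $(G_1)_\delta\square\cdots\square(G_k)_\delta$ as a spanning subgraph of $(G_1\square\cdots\square G_k)_\delta$ and Sabidussi's theorem (extended to $k$ factors) to evaluate its chromatic number.
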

\begin{proof}
   The proof follows directly from Theorem \ref{thm: chiProduct} and \ref{thm: cartesianEdge_super}.
\end{proof}

\begin{theorem}\label{thm: DistinctDegDif}
Let $G$ and $H$ be graphs. If any positive degree difference of vertices in $G$ is not equal to that of in $H$, then
    \[
    \chi_\delta(G\square H) \leq n_{\max}(H)\cdot \max( \chi_\delta(G) , m(H) )
    \]
    where $n_{\max}(H)$ denotes the maximum number of vertices of the same degree in $H$ and $m(H)$ is the number of different degrees in $H$. 
    Furthermore, the bound is sharp.
\end{theorem}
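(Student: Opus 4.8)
The plan is to build an explicit proper coloring of $(G\square H)_\delta$ using $\max(\chi_\delta(G),m(H))$ colors in a first slot and $n_{\max}(H)$ colors in a second slot, giving the claimed product. The first step is to exploit the hypothesis to simplify the ``diagonal'' edge set $S$ from Theorem~\ref{thm: cartesianEdge}. Since $d_{G\square H}((x,y))=d_G(x)+d_H(y)$, an $S$-edge $(x,y)(x',y')$ with $x\neq x'$ and $y\neq y'$ satisfies $d_G(x)-d_G(x')=d_H(y')-d_H(y)$. If $d_G(x)\neq d_G(x')$, this would make $|d_G(x)-d_G(x')|=|d_H(y)-d_H(y')|$ a common positive degree difference of $G$ and $H$, contradicting the hypothesis; hence every $S$-edge in fact joins vertices with $d_G(x)=d_G(x')$ and $d_H(y)=d_H(y')$.

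Next I would fix a proper coloring $f\colon V(G)\to\{0,\dots,\chi_\delta(G)-1\}$ of $G_\delta$, let $\phi\colon V(H)\to\{0,\dots,m(H)-1\}$ send each vertex to the index of its degree class, and let $\psi\colon V(H)\to\{1,\dots,n_{\max}(H)\}$ be injective on each degree class (possible since each class has at most $n_{\max}(H)$ vertices). Writing $N=\max(\chi_\delta(G),m(H))$, I define
\[
c(x,y)=\big((f(x)+\phi(y))\bmod N,\ \psi(y)\big).
\]
This uses at most $N\cdot n_{\max}(H)$ colors, matching the bound, so the remaining task is to verify that $c$ is proper.

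I would then check $c$ on each of the edge types identified by Theorem~\ref{thm: cartesianEdge}. On an edge with fixed second coordinate, coming from $E(G_\delta\square H_\delta)$, the first slot differs because $f(x)\neq f(x')$ with both values in $\{0,\dots,\chi_\delta(G)-1\}\subseteq\{0,\dots,N-1\}$, so they stay distinct after adding the common shift $\phi(y)$ modulo $N$. On an edge $(x,y)(x,y')\in E(G_\delta\square H_\delta)$ with fixed first coordinate, either $y,y'$ lie in the same degree class, in which case $\psi(y)\neq\psi(y')$ separates them in the second slot, or they lie in different classes, in which case $\phi(y)\neq\phi(y')$ with both in $\{0,\dots,m(H)-1\}\subseteq\{0,\dots,N-1\}$ separates them in the first slot. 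On an $S$-edge the simplified description forces $y,y'$ into the same degree class with $y\neq y'$, so again $\psi(y)\neq\psi(y')$.

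The main obstacle is the simultaneous handling of the two kinds of $H_\delta$-edges in the first slot: the modular shift by the degree-class index is exactly what lets a single palette of size $\max(\chi_\delta(G),m(H))$ absorb both the $G_\delta$-coloring (for same-class neighbours) and the class separation (for cross-class neighbours) without collision, and making the arithmetic ranges fit inside $\{0,\dots,N-1\}$ is the delicate point. Finally, for sharpness I would exhibit an explicit pair $(G,H)$ satisfying the degree-difference hypothesis for which $(G\square H)_\delta$ contains a clique of size $n_{\max}(H)\cdot\max(\chi_\delta(G),m(H))$, forcing equality; natural candidates are the products of cycles, paths, and stars whose $\delta$-chromatic numbers are computed in the following section.
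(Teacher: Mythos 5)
Your proof is correct and essentially the same as the paper's: your coloring $c(x,y)=\big((f(x)+\phi(y))\bmod N,\ \psi(y)\big)$ is precisely the paper's coloring $c(g,h_{j,k})=f(g,j)+(k-1)p$ written in two-coordinate form (first slot = cyclically shifted $G_\delta$-coloring, second slot = index within the degree class), with the same reduction of the diagonal edge set $S$ to pairs having equal $G$-degrees and equal $H$-degrees. One small caution on the deferred sharpness step: the paper's witness is $C_n\square P_3$ (Theorem~\ref{thm: CP}), whose lower bound for odd $n$ comes from two completely joined copies of $(C_n)_\delta$ rather than from a clique, so if you insist on a clique certificate you should take $n$ even or use another family such as the star--path products of Theorem~\ref{thm: StarPath}.
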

\begin{proof}
    By Theorem~\ref{thm: cartesianEdge} and the assumption that any positive degree difference of vertices in $G$ is not equal to that of in $H$, the edges in $S$ are $uv$ where $u=(u_1,u_2)$, $v=(v_1,v_2)$ such that $u_1\neq v_1$, $u_2\neq v_2$, $d_G(u_1)=d_G(v_1)$ and $d_H(u_2)=d_H(v_2)$.
    We partition $V(H)$ according to vertex degree into $W_1, W_2, \dots, W_{m(H)}$.
    Write $W_j = \{h_{j,1}, h_{j,2}, \dots, h_{j,n_j} \}$  for $1\leq j\leq m(H)$.

    Define $p=\max( \chi_\delta(G) , m(H) )$.
    Let $c_0:V(G)\to \{1,2,\dots,\chi_\delta(G)\}$ be a proper coloring of $G_\delta$.
    We define a coloring $c : V(G)\times V(H) \to \{1,2,\dots, n_{\max}(H)\cdot p\}$ as 
    \[ c(g, h_{j,k}) = f(g,j) + (k-1)p, \]
    for $k=1,\dots,n_j$, 
    where $f(g,j)\in\{1,2,\dots,p\}$ and $f(g,j) \equiv c_0(g) + j - 1 \pmod p$.
    The first copy of $G$ in $W_1$ gets the original coloring $c_0$, while we keep adding $p$ to the coloring of each other copy of $G$ in $W_1$.
    In other $W_j$, we perform different cyclic permutations modulo $p$ to $c_0$ and assign it to the first copy of $G$ in $W_j$.
    See Table~\ref{tab: DistinctDegDif} for an example.
    We see that the vertices in the same copy of $G$ received a coloring equivalent to $c_0$ and a cyclic permutation modulo $p$ up to an additive constant $(k-1)p$ for some $k=1,\dots,n_j$.
    For a fixed $g\in V(G)$, the vertices in the same copy of $H$, written in the form $(g,h_{j,k})$ where $1\leq j\leq m(H)$ and $1\leq k \leq n_j $, received distinct colors because $j \leq p$ and $k\leq n_{\max}(H)$.
    
    Lastly, any endpoints of an edge in $S$ are of the form $(g, h_{j,k})$ and $(g', h_{j,k'})$ where $g\neq g'$ and $k\neq k'$, which received different colors as $k\neq k'$. 
    The sharpness of the bound appears in Theorem \ref{thm: CP}.
\end{proof}

\begin{table}
    \centering
    \begin{tikzpicture}
      \matrix(table)[
      matrix of nodes,
      row sep =-\pgflinewidth,
      column sep = -\pgflinewidth,
      nodes={anchor=center,
             minimum width=20pt, 
             minimum height=20pt, 
             },
      ] 
      {
        & $h_{1,1}$ & $h_{1,2}$ & $h_{2,1}$ & $h_{3,1}$ & $h_{3,2}$ & $h_{4,1}$ & $h_{4,2}$\\
        $g_1$ & 1 & 5 & 2 & 3 & 7 & 4 & 8 \\
        $g_2$ & 3 & 7 & 4 & 1 & 5 & 2 & 6 \\
        $g_3$ & 1 & 5 & 2 & 3 & 7 & 4 & 8 \\
        $g_4$ & 2 & 6 & 3 & 4 & 8 & 1 & 5 \\
        $g_5$ & 3 & 7 & 4 & 1 & 5 & 2 & 6 \\
      };
      \draw (table-2-2.north west) -- (table-2-8.north east) -- (table-6-8.south east) -- (table-6-2.south west) -- cycle ;
      \draw (table-2-4.north west) -- (table-6-4.south west);
      \draw (table-2-5.north west) -- (table-6-5.south west);
      \draw (table-2-7.north west) -- (table-6-7.south west);
    \end{tikzpicture}
    \medskip
    \caption{An example of a coloring in the proof of Theorem~\ref{thm: DistinctDegDif} where $\chi_\delta(G)=3$, $m(H)=4$ and $n_{\max}(H)=2$.}
    \label{tab: DistinctDegDif}
\end{table}

\begin{corollary} \label{cor: GP_3}
    Let $G$ be a graph with $\chi_\delta(G) \geq 2$.
    If $d_G(v)\neq d_G(u)+1$ for all $u,v\in V(G)$, then $\chi_{\delta}(G\square P_3)\leq 2\chi_{\delta}(G)$.
\end{corollary}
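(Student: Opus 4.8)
The plan is to apply Theorem~\ref{thm: DistinctDegDif} directly with $H = P_3$, so that the work reduces to computing the three relevant parameters of $P_3$ and verifying that the degree-difference hypothesis of that theorem is satisfied under our assumption on $G$.

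First I would record the degree data of $P_3$. Writing $P_3$ as the path on vertices $a,b,c$ with edges $ab$ and $bc$, the endpoints $a$ and $c$ have degree $1$ while the center $b$ has degree $2$. Hence there are exactly two distinct degrees, so $m(P_3) = 2$, and the largest degree class (the two degree-$1$ endpoints) has size two, so $n_{\max}(P_3) = 2$. Moreover, the only positive degree difference occurring among the vertices of $P_3$ is $2 - 1 = 1$.

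Next I would check the hypothesis of Theorem~\ref{thm: DistinctDegDif}, namely that no positive degree difference of vertices in $G$ equals a positive degree difference of vertices in $P_3$. Since the only positive degree difference in $P_3$ is $1$, this requirement is exactly that no two vertices of $G$ have degrees differing by $1$, which is precisely the standing assumption $d_G(v) \neq d_G(u) + 1$ for all $u,v \in V(G)$. With the hypothesis confirmed, substituting into the theorem gives
\[
\chi_\delta(G \square P_3) \leq n_{\max}(P_3)\cdot \max\bigl(\chi_\delta(G), m(P_3)\bigr) = 2\cdot \max\bigl(\chi_\delta(G), 2\bigr),
\]
and the assumption $\chi_\delta(G) \geq 2$ forces $\max(\chi_\delta(G),2) = \chi_\delta(G)$, yielding the claimed bound $\chi_\delta(G \square P_3) \leq 2\chi_\delta(G)$.

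There is no genuine obstacle to overcome: the corollary is a direct specialization of Theorem~\ref{thm: DistinctDegDif}. The only points requiring care are the correct reading of the $P_3$ parameters $m(P_3)=n_{\max}(P_3)=2$ and the observation that the hypothesis $d_G(v)\neq d_G(u)+1$ is exactly the statement that the single positive degree difference of $P_3$ does not appear among the vertices of $G$; once these are in place, the $\chi_\delta(G)\geq 2$ assumption is what collapses the maximum to give the clean factor of two.
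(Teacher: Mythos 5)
Your proof is correct and follows exactly the route the paper intends: the corollary is stated without proof precisely because it is the direct specialization of Theorem~\ref{thm: DistinctDegDif} to $H=P_3$, using $m(P_3)=n_{\max}(P_3)=2$, the fact that the only positive degree difference in $P_3$ is $1$, and the hypothesis $\chi_\delta(G)\geq 2$ to collapse the maximum.
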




\begin{theorem}\label{thm: CP}
    For $n\geq 5$, we have $\chi_{\delta}(C_n\square P_3)=2\chi_{\delta}(C_n)=2\left\lceil\frac{n}{2}\right\rceil$.
\end{theorem}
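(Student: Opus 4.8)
The plan is to prove the two inequalities separately. For the upper bound I would invoke Corollary~\ref{cor: GP_3} with $G=C_n$: the cycle $C_n$ is $2$-regular, so $d_{C_n}(v)=2\neq 3=d_{C_n}(u)+1$ for all $u,v\in V(C_n)$, and $\chi_\delta(C_n)=\left\lceil\frac{n}{2}\right\rceil\geq 3\geq 2$ for $n\geq 5$. The hypotheses of the corollary are met, giving $\chi_\delta(C_n\square P_3)\leq 2\chi_\delta(C_n)=2\left\lceil\frac{n}{2}\right\rceil$ at once. Alternatively this follows from Theorem~\ref{thm: DistinctDegDif}, since $C_n$ has no positive degree difference (making the hypothesis vacuously true), $n_{\max}(P_3)=2$, $m(P_3)=2$, and $\max(\chi_\delta(C_n),2)=\left\lceil\frac{n}{2}\right\rceil$.

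The real work is the matching lower bound, for which I would describe $(C_n\square P_3)_\delta$ explicitly via Theorem~\ref{thm: cartesianEdge}. Writing $P_3$ as a path $a\text{-}b\text{-}c$, the two leaves $a,c$ have degree $1$ and the center $b$ has degree $2$, so a short check shows $(P_3)_\delta=K_3$; since $C_n$ is regular, $(C_n)_\delta=\overline{C_n}$. In $C_n\square P_3$ a vertex $(v,x)$ has degree $2+d_{P_3}(x)$, hence degree $3$ when $x\in\{a,c\}$ and degree $4$ when $x=b$. I would organize the vertex set into three layers $A,B,C$ according to the $P_3$-coordinate. The term $E((C_n)_\delta\square(P_3)_\delta)$ contributes a copy of $\overline{C_n}$ inside each layer together with the column triangles on $\{(v,a),(v,b),(v,c)\}$. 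The set $S$ of Theorem~\ref{thm: cartesianEdge} forces equal degree on its endpoints, so both endpoints must lie in the degree-$3$ class; since the two coordinates must also differ, one endpoint lies in $A$ and the other in $C$, giving exactly the edges $(u,a)(v,c)$ with $u\neq v$.

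The key observation is then that layers $A$ and $C$ are completely joined: combining the column edges $(v,a)(v,c)$ coming from $E((C_n)_\delta\square(P_3)_\delta)$ with the edges $(u,a)(v,c)$, $u\neq v$, coming from $S$ yields every pair between $A$ and $C$, i.e.\ a complete bipartite graph $K_{n,n}$. Consequently, in any proper coloring of $(C_n\square P_3)_\delta$ no color can appear on both $A$ and $C$, so the color sets used on these two layers are disjoint. Each of $A$ and $C$ induces a copy of $\overline{C_n}$ and therefore requires at least $\chi(\overline{C_n})=\left\lceil\frac{n}{2}\right\rceil$ colors; disjointness then forces at least $2\left\lceil\frac{n}{2}\right\rceil$ colors in total, so $\chi_\delta(C_n\square P_3)\geq 2\left\lceil\frac{n}{2}\right\rceil$, matching the upper bound.

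I expect the main obstacle to be correctly pinning down the edge set $S$ and recognizing the completely-joined structure between the two degree-$3$ layers; once the $A$–$C$ join and the induced $\overline{C_n}$ in each layer are established, the chromatic lower bound is a one-line disjointness argument. Some care is also needed to confirm $\chi_\delta(C_n)\geq 3$ so that the maximum in the upper bound is attained by $\chi_\delta(C_n)$ rather than by $m(P_3)=2$, which is precisely where the hypothesis $n\geq 5$ enters.
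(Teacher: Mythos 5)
Your proposal is correct and follows essentially the same route as the paper: the upper bound via Corollary~\ref{cor: GP_3} applied to the $2$-regular graph $C_n$, and the lower bound by showing the first and third copies of $C_n$ (each inducing $\overline{C_n}$, hence needing $\left\lceil\frac{n}{2}\right\rceil$ colors) are completely joined in $(C_n\square P_3)_\delta$, forcing disjoint color sets. The only cosmetic difference is that you derive the complete join by itemizing $E((C_n)_\delta\square(P_3)_\delta)\cup S$ from Theorem~\ref{thm: cartesianEdge}, whereas the paper observes directly that $(u,v_1)$ and $(w,v_3)$ have equal degree and are non-adjacent in $C_n\square P_3$, hence adjacent in the $\delta$-complement.
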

\begin{proof}
Let $P_3=v_1v_2v_3$.
It is easy to see that $\chi_{\delta}(C_n)=\chi(\overline{C_n}) = \left\lceil\frac{n}{2}\right\rceil$.
Since $C_n$ is regular, it also follows that 
$d_{C_n\square P_3}(u,v_1) = d_{C_n\square P_3}(w,v_3)$
for all $u,w\in V(C_n)$.
Since $(u,v_1)$ is not adjacent to $(w,v_3)$ in $C_n\square P_3$, it follows that each vertex in the first copy of $C_n$ is adjacent to all the vertices in the third copy of $C_n$ in $(C_n\square P_3)_\delta$.
Hence the colors used in the two copies do not coincide.
Thus $\chi_\delta(C_n\square P_3)\geq 2\chi_{\delta}(C_n)$.
By Corollary~\ref{cor: GP_3}, we can conclude that $\chi_\delta(C_n\square P_3)= 2\chi_{\delta}(C_n)$.
\end{proof}



\begin{example} \label{ex: poor_star_P3}
    For $m\geq 3$, we have $\chi_\delta(S_{1,m}\square P_3) \leq 2(m+1)$.
\end{example}
\begin{proof}
    Let $G=S_{1,m}$ and $H=P_3$. Theorem~\ref{thm: DistinctDegDif} gives the desired upper bound.
\end{proof}

The bound in Example~\ref{ex: poor_star_P3} is not sharp.
When $G = K_1 \vee H$ is a join of a singleton and a regular graph $H$,
the following theorem gives an improved upper bound on $\chi_{\delta}(G\square P_3)$ in terms of $\chi_\delta(H)$.
Examples of the graph $G$ include stars $S_{1,m} = K_1\vee N_m$ (in Theorem~\ref{thm: StarPath}), wheels $W_{m}=K_1\vee C_m$, and windmills $K_1\vee mK_n$.

\begin{theorem} \label{thm:uvH}
    Let $H$ be a $k$-regular graph.
    Let $G=\{u\} \vee H$ be the join of a singleton and $H$.
    Suppose $|V(H)|\geq 3$ and $\chi_{\delta}(H)\geq 2$.
    If $|V(H)| > k+2$, then $\chi_{\delta}(G\square P_3)\leq 2\chi_{\delta}(H)$.
\end{theorem}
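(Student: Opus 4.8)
The plan is to apply Theorem~\ref{thm: cartesianEdge} to pin down the exact edge structure of $(G\square P_3)_\delta$ and then exhibit an explicit proper colouring with $2\chi_\delta(H)$ colours. Write $n=|V(H)|$ and $P_3=v_1v_2v_3$. First I would record the degrees: in $G$ the apex $u$ has degree $n$ while every vertex of $H$ has degree $k+1$, and the hypothesis $n>k+2$ guarantees $n\notin\{k,k+1,k+2\}$. Consequently, in $G\square P_3$ the three ``$u$-vertices'' $(u,v_1),(u,v_2),(u,v_3)$ carry degrees $n+1,n+2,n+1$, which are strictly larger than the degrees $k+2,k+3,k+2$ of the ``$H$-vertices'' $(w,v_i)$; this separation is what the degree hypothesis buys us. Since $H$ is regular, $H_\delta=\overline H$ and $\chi_\delta(H)=\chi(\overline H)=:r\ge 2$; since $u$ is the unique vertex of its degree in $G$, one checks directly that $G_\delta=\{u\}\vee\overline H$, while $(P_3)_\delta$ is the triangle on $v_1,v_2,v_3$.

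Next I would describe $(G\square P_3)_\delta$ concretely. By Theorem~\ref{thm: cartesianEdge} its edges are those of $(\{u\}\vee\overline H)\square K_3$ together with the set $S$. The product part contributes, for each fixed $g$, a triangle on the column $\{(g,v_1),(g,v_2),(g,v_3)\}$, and within each layer $v_i$ a copy of $\{u\}\vee\overline H$. The degree computation above shows that $S$ consists precisely of the pairs $(w,v_1)(w',v_3)$ with $w,w'\in V(H)$ and $w\neq w'$; no $S$-edge meets a $u$-vertex, since any two $u$-vertices agree in the first coordinate while a $u$-vertex and an $H$-vertex have different degrees. Combining the $S$-edges with the column triangles $(w,v_1)(w,v_3)$, every vertex $(w,v_1)$ is adjacent to every $(w',v_3)$, so the $H$-part of layer $v_1$ and the $H$-part of layer $v_3$ are forced to use disjoint palettes; this is exactly where the factor of $2$ comes from.

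With the structure in hand, the colouring is the heart of the argument. Fix a proper $r$-colouring $\phi$ of $\overline H$ and a fixed-point-free cyclic permutation $\pi$ of $\{1,\dots,r\}$ (available since $r\ge 2$). I would set $c(w,v_1)=\phi(w)$, $c(w,v_3)=\phi(w)+r$, and $c(w,v_2)=\pi(\phi(w))$ for $w\in V(H)$, and colour the three apex vertices by $c(u,v_1)=r+1$, $c(u,v_2)=r+2$, $c(u,v_3)=1$. All colours lie in $\{1,\dots,2r\}$. One then verifies properness edge type by edge type: within each layer $\phi$, $\pi\circ\phi$, and $\phi+r$ are proper $\overline H$-colourings and the apex colour avoids its layer's palette; each column receives three distinct colours; and the disjoint-palette requirement across layers $v_1$ and $v_3$ holds because $\{1,\dots,r\}$ and $\{r+1,\dots,2r\}$ are disjoint.

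The main obstacle, and the reason the bound improves on the $2\chi_\delta(G)=2(r+1)$ that Corollary~\ref{cor: GP_3} would give, is the apex vertex: a naive colouring would spend a fresh colour on $u$ in each layer. The key observation that makes $2r$ colours suffice is that no $S$-edge is incident to a $u$-vertex, so $u$ is constrained only by its column triangle and by the vertices of its own layer. This lets each apex vertex borrow a colour already used by $H$-vertices in a \emph{different} layer, and the degree hypothesis $n>k+2$ is precisely what ensures this non-incidence. I would close by noting that the construction uses $\chi_\delta(H)\ge 2$ in two places: to choose a fixed-point-free $\pi$ and to make $r+2\le 2r$.
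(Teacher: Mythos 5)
Your proposal is correct and follows essentially the same route as the paper: the same degree-separation argument showing the apex vertices $(u,v_j)$ are adjacent only to vertices in their own layer, the same observation that the $H$-parts of layers $v_1$ and $v_3$ are completely joined (forcing disjoint palettes), and the same $2\chi_\delta(H)$-coloring (base coloring in layer $v_1$, palette shifted by $\chi_\delta(H)$ in layer $v_3$, a fixed-point-free cyclic shift in layer $v_2$, and apex colors $\chi_\delta(H)+1$, $\chi_\delta(H)+2$, $1$). The only cosmetic difference is that you pin down the full edge set by computing $G_\delta=\{u\}\vee\overline{H}$ and $(P_3)_\delta=K_3$ and invoking Theorem~\ref{thm: cartesianEdge}, whereas the paper establishes just the needed adjacencies directly from degree comparisons.
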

\begin{proof}
    Let $r\in V(H)$. 
    Note that $d_G(u)=|V(H)|$.
    Since $d_{G\square P_3}(r,v_i)=d_G(r)+d_{P_3}(v_i)
    \leq k+3
    < d_G(u)+1\leq d_{G\square P_3}(u,v_j)$ for $i,j\in \{1,2,3\}$, we have $(r,v_i)$ and $(u,v_j)$  are adjacent in $(G\square P_3)_\delta$ if and only if $i=j$.

    Let $H_\delta^i$ be the $i$-th copy of $H_\delta$ in $G_\delta$ for $i=1,2,3$.
    Next, we construct a proper coloring $c$ as follows.
    We trivially color $H_{\delta}^1$, as a copy of $H_\delta$, by a $\chi_{\delta}(H)$-coloring.
    Since 
    each vertex in $H_{\delta}^1$ is adjacent to any vertices in $H_{\delta}^3$,
    it requires $2\chi_{\delta}(H)$ colors for $H^1_{\delta}$ and $H^3_{\delta}$.
    We color $H^3_{\delta}$ using $c(r,v_3)=c(r,v_1)+\chi_{\delta}(H)$.
    For $i=1,3$, we notice that a vertex $(r,v_i)\in V(H^i_{\delta})$ and $(s,v_2)\in V(H^2_{\delta})$ are adjacent if and only if $r= s$. 
    We let $c(r,v_2)=c(r,v_1)+1$ if $1\leq c(r,v_1)\leq \chi_{\delta}(H)-1$; otherwise, $c(r,v_2)=1$.
    Lastly, we color $(u,v_1)$, $(u,v_2)$ and $(u,v_3)$ by $\chi_\delta(H)+1$, $\chi_\delta(H)+2$ and $1$, respectively.  
    This gives a proper coloring of $(G\square P_3)_\delta$ with $2\chi_{\delta}(H)$ colors.
\end{proof}

The sharpness of the bound in Theorem~\ref{thm:uvH} will be shown in Theorem~\ref{thm: StarPath}.

\section{The $\delta$-chromatic numbers of the Cartesian products of some graphs}

In this section, we give the exact values of the $\delta$-chromatic numbers of the Cartesian products of stars and paths.


\begin{theorem}\label{thm: StarStar}
    $\chi_\delta(S_{1,m}\square S_{1,n}) = mn$ for $m,n\geq 3$.
\end{theorem}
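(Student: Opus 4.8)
The plan is to pin down the exact structure of $(S_{1,m}\square S_{1,n})_\delta$ using Theorem~\ref{thm: cartesianEdge}, read off a large clique to get the lower bound, and then match it with an explicit coloring. First I would compute the $\delta$-complements of the factors. Writing $c$ for the center of $S_{1,m}$ and $\ell_1,\dots,\ell_m$ for its leaves, the center (degree $m$) keeps all its edges to the leaves because their degrees differ, while the leaves (all of degree $1$) become pairwise adjacent; hence $(S_{1,m})_\delta=K_{m+1}$, and likewise $(S_{1,n})_\delta=K_{n+1}$. By Theorem~\ref{thm: cartesianEdge}, $(S_{1,m}\square S_{1,n})_\delta$ is $K_{m+1}\square K_{n+1}$ together with the set $S$ of vertex pairs whose two coordinates both differ and whose degrees in $S_{1,m}\square S_{1,n}$ agree. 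Since $d_{S_{1,m}\square S_{1,n}}(x,y)=d_{S_{1,m}}(x)+d_{S_{1,n}}(y)$, the vertex set splits into four classes: the single center-center vertex $(c,c')$ of degree $m+n$, the $n$ center-leaf vertices of degree $m+1$, the $m$ leaf-center vertices of degree $n+1$, and the $mn$ leaf-leaf vertices, all of degree $2$.

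For the lower bound I would concentrate on the $mn$ leaf-leaf vertices. Any two of them are adjacent in $(S_{1,m}\square S_{1,n})_\delta$: if they share a coordinate, this is an edge of $K_{m+1}\square K_{n+1}$; if both coordinates differ, the edge lies in $S$, because every leaf-leaf vertex has degree $2$ and so the two degrees coincide. Thus the leaf-leaf vertices induce a clique $K_{mn}$, which gives $\chi_\delta(S_{1,m}\square S_{1,n})\ge mn$.

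For the matching upper bound I would exhibit a proper coloring whose palette is the grid $\{[i,j]: 1\le i\le m,\ 1\le j\le n\}$ of size $mn$. Color the leaf-leaf vertex $(\ell_i,\ell'_j)$ with $[i,j]$, so the clique receives all colors. Each center-leaf vertex is adjacent only to its \emph{column} of leaf-leaf vertices (besides the other center-leaf vertices and $(c,c')$), so it forbids exactly one column of colors; each leaf-center vertex forbids exactly one row; and $(c,c')$ is adjacent to no leaf-leaf vertex at all. The one genuinely delicate point is that a center-leaf and a leaf-center vertex are adjacent precisely when $m=n$, since only then do their degrees $m+1$ and $n+1$ coincide and place the pair in $S$; their color classes must therefore be made globally disjoint. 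I would achieve this uniformly by coloring the leaf-center vertices down the first column via a cyclic shift $C_i=[(i\bmod m)+1,\,1]$, the center-leaf vertices along the first row via a cyclic shift $B_j=[1,(j\bmod n)+1]$ but with the one vertex that would collide displaced off that row to a free cell, and $(c,c')$ by any leftover color; since $(m-1)(n-1)\ge 4$ there is ample room. Checking the finitely many adjacency types then shows the coloring is proper, yielding $\chi_\delta(S_{1,m}\square S_{1,n})\le mn$, and equality follows.

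The main obstacle will be the upper-bound construction, and inside it the case $m=n$: only there do the center-leaf and leaf-center classes see each other, so their colors must be pairwise distinct while each class still avoids an entire row or column of the leaf-leaf clique. Producing a single assignment that satisfies all of these constraints at once, rather than branching into separate cases, is the crux of the argument.
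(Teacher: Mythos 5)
Your proposal is correct and takes essentially the same approach as the paper: the lower bound is the same $mn$-clique on the leaf--leaf vertices, and the upper bound is an explicit $mn$-coloring in which the leaf--leaf clique receives the full grid of colors while the center--leaf and leaf--center vertices receive cyclically shifted row/column colors avoiding their forbidden lines. The only cosmetic difference is that the paper's single coloring formula makes the center--leaf and leaf--center palettes disjoint for all $m,n$ (so the case $m=n$ needs no special treatment), whereas you resolve that one collision by displacing a single vertex to a free cell.
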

\begin{proof}
    Let $V(S_{1,n})=\{0,1,\dots,k\}$ where $d_{S_{1,k}}(0)=k$ for $k=n,m$.
    An $mn$-coloring on $(S_{1,m}\square S_{1,n})_\delta$ is
    \[ c(i,j) =
    \begin{cases}
        j+1         & \text{if } i=0, \\
        (i-1)n + j & \text{if } 1\leq i \leq m \text{ and } 1\leq j \leq n,\\
        (i+1)n     & \text{if } 1\leq i < m \text{ and } j=0, \\
        n+2     & \text{if } i=m \text{ and } j=0,
    \end{cases}\]
    as shown in Fig. \ref{fig:S1m-S1n}.
    In addition, the set $\{(i,j) : 1\leq i \leq m, 1\leq j\leq n\}$ forms an $mn$-clique in $(S_{1,m}\square S_{1,n})_\delta$.
\end{proof}

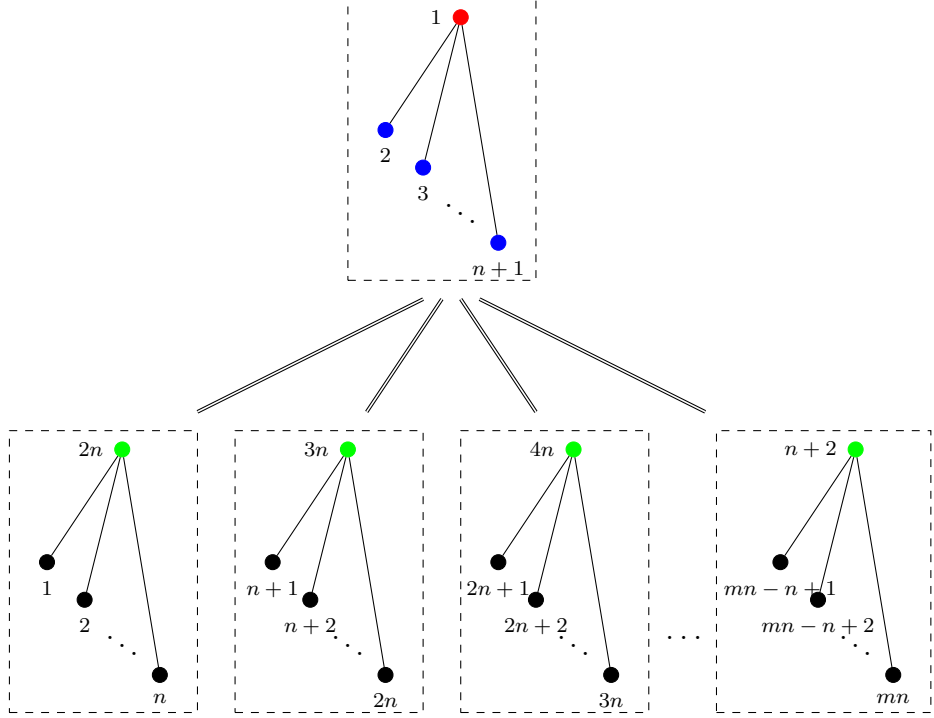
\begin{figure}
    \centering
    \begin{tikzpicture}[
        scale=0.5,
        every label/.append style={font=\tiny}
    ]
    \tikzstyle{pt}=[draw,shape=circle,inner sep=2pt,fill=black];
    \begin{scope}[shift={(6,11.5)}]
        \node[pt,red] (v0) at (2,-1) [label=left:$1$] {};
        \node[pt,blue] (v1) at (0,-4) [label=below:$2$] {};
        \node[pt,blue] (v2) at (1,-5) [label=below:$3$] {};
        \node at (2,-6) {$\ddots$};
        \node[pt,blue] (vm) at (3,-7) [label=below:$n+1$] {};
        \draw (v0) -- (v1) (v0) -- (v2) (v0) -- (vm);
        \draw[dashed] (-1,-8) rectangle (4,-.5);
    \end{scope}
    \begin{scope}[shift={(-3,0)}]
        \node[pt,green] (v0) at (2,-1) [label=left:$2n$] {};
        \node[pt,black] (v1) at (0,-4) [label=below:$1$] {};
        \node[pt,black] (v2) at (1,-5) [label=below:$2$] {};
        \node at (2,-6) {$\ddots$};
        \node[pt,black] (vm) at (3,-7) [label=below:$n$] {};
        \draw (v0) -- (v1) (v0) -- (v2) (v0) -- (vm);
        \draw[dashed] (-1,-8) rectangle (4,-.5);
    \end{scope}
    \begin{scope}[shift={(3,0)}]
        \node[pt,green] (v0) at (2,-1) [label=left:$3n$] {};
        \node[pt,black] (v1) at (0,-4) [label=below:$n+1$] {};
        \node[pt,black] (v2) at (1,-5) [label=below:$n+2$] {};
        \node at (2,-6) {$\ddots$};
        \node[pt,black] (vm) at (3,-7) [label=below:$2n$] {};
        \draw (v0) -- (v1) (v0) -- (v2) (v0) -- (vm);
        \draw[dashed] (-1,-8) rectangle (4,-.5);
    \end{scope}
    \begin{scope}[shift={(9,0)}]
        \node[pt,green] (v0) at (2,-1) [label=left:$4n$] {};
        \node[pt,black] (v1) at (0,-4) [label=below:$2n+1$] {};
        \node[pt,black] (v2) at (1,-5) [label=below:$2n+2$] {};
        \node at (2,-6) {$\ddots$};
        \node[pt,black] (vm) at (3,-7) [label=below:$3n$] {};
        \draw (v0) -- (v1) (v0) -- (v2) (v0) -- (vm);
        \draw[dashed] (-1,-8) rectangle (4,-.5);
    \end{scope}
    \node at (14,-6) {$\dots$};
    \begin{scope}[shift={(16.5,0)}]
        \node[pt,green] (v0) at (2,-1) [label=left:$n+2$] {};
        \node[pt,black] (v1) at (0,-4) [label=below:$mn-n+1$] {};
        \node[pt,black] (v2) at (1,-5) [label=below:$mn-n+2$] {};
        \node at (2,-6) {$\ddots$};
        \node[pt,black] (vm) at (3,-7) [label=below:$mn$] {};
        \draw (v0) -- (v1) (v0) -- (v2) (v0) -- (vm);
        \draw[dashed] (-1.7,-8) rectangle (4,-.5);
    \end{scope}
    \draw[double] (7,3) -- +(-6,-3) ;
    \draw[double] (7.5,3) -- +(-2,-3) ;
    \draw[double] (8,3) -- +(2,-3) ;
    \draw[double] (8.5,3) -- +(6,-3) ;
    \end{tikzpicture}
    \caption{A proper $mn$-coloring of $(S_{1,m}\square S_{1,n})_\delta$. The vertices of the same degree in $S_{1,m}\square S_{1,n}$ are indicated by the same color  and are pairwise adjacent in $(S_{1,m}\square S_{1,n})_\delta$. 
    Each double line denotes the edges connecting 
    the corresponding vertices between two copies of $S_{1,n}$.
    Note that the blue and the green will have the same degree when $m=n$.}
    \label{fig:S1m-S1n}
\end{figure}

\begin{theorem} \label{thm: StarPath}
    $\chi_\delta(S_{1,m}\square P_n) = m \left\lceil \frac{n-2}{2} \right\rceil$ for $m\geq 3$ and $n\geq 3$.
\end{theorem}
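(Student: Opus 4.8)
The plan is to read off the four degree classes of $S_{1,m}\square P_n$ and the induced structure of its $\delta$-complement, then sandwich $\chi_\delta$ between a clique lower bound and an explicit coloring. Write $V(S_{1,m})=\{0,1,\dots,m\}$ with $0$ the center, and $V(P_n)=\{v_1,\dots,v_n\}$. Since $d_{S_{1,m}\square P_n}(i,v_j)=d_{S_{1,m}}(i)+d_{P_n}(v_j)$, the vertices split by degree into the centered endpoints $A=\{(0,v_1),(0,v_n)\}$ (degree $m+1$), the centered interior $B=\{(0,v_j):2\le j\le n-1\}$ (degree $m+2$), the leaf endpoints $C=\{(i,v_1),(i,v_n):1\le i\le m\}$ (degree $2$), and the leaf interior $D=\{(i,v_j):1\le i\le m,\ 2\le j\le n-1\}$ (degree $3$). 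Using the definition of $\delta$-complement (equivalently Theorem~\ref{thm: cartesianEdge}), within each class two vertices are adjacent in $(S_{1,m}\square P_n)_\delta$ exactly when they are non-adjacent in the product. The two key observations are that $D$ induces the join of $m$ copies of $\overline{P_{n-2}}$ — inside a fixed leaf $(i,v_j)\sim(i,v_{j'})$ iff $|j-j'|\ge 2$, while across distinct leaves all pairs are adjacent because leaves are mutually non-adjacent in the star — and that $C$ induces a clique $K_{2m}$, since no two leaf-endpoints are adjacent in the product.

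For the lower bound I would exhibit a clique of size $m\lceil (n-2)/2\rceil$ inside $D$. In each leaf-copy pick the interior path-positions forming a maximum independent set of $P_{n-2}$ (every other interior position); these are pairwise at distance $\ge 2$, hence pairwise adjacent in the induced $\overline{P_{n-2}}$, and adjacency across leaves is automatic in the join. Their union gives $m\cdot\alpha(P_{n-2})=m\lceil (n-2)/2\rceil$ mutually adjacent vertices, so $\chi_\delta(S_{1,m}\square P_n)\ge m\lceil (n-2)/2\rceil$.

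For the matching upper bound, set $p=\lceil (n-2)/2\rceil$ and partition the palette $\{1,\dots,mp\}$ into $m$ consecutive blocks of size $p$, one per leaf. Color $D$ by the canonical $p$-coloring of $\overline{P_{n-2}}$ inside each leaf's block (consecutive interior positions share a color, leaf $i$ uses block $i$); disjoint blocks make this proper on the join and use all $mp$ colors. I would then extend to $C$, $B$, $A$ by reusing colors while respecting the only cross-class edges, which are $(0,v_j)\sim(i,v_j)$ for all $i$ and $2\le j\le n-1$, the pairs $(i,v_1)\sim(i,v_2)$ and $(i,v_n)\sim(i,v_{n-1})$, and the handful $(0,v_1)\sim(0,v_2)$, $(0,v_n)\sim(0,v_{n-1})$, $(0,v_1)\sim(i,v_1)$, $(0,v_n)\sim(i,v_n)$. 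Since $B\cong\overline{P_{n-2}}$ and each $(0,v_j)$ must merely avoid the single block-offset used by every $(i,v_j)$, one cyclic shift of the offsets inside one block makes $B$ a proper $p$-coloring that dodges all its $D$-neighbours; the clique $C$ and the pair $A$ are then placed into the remaining offsets by a short system-of-distinct-representatives argument (e.g.\ sending $(i,v_1),(i,v_n)$ to two free offsets of block $i$). The genuine work is verifying that this assignment is simultaneously proper across all four classes.

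The main obstacle is the degree-$2$ clique $C$. Because $C\cong K_{2m}$ we always have $\chi_\delta(S_{1,m}\square P_n)\ge 2m$, so the target $m\lceil (n-2)/2\rceil$ can be the binding value only when $m\lceil (n-2)/2\rceil\ge 2m$, i.e.\ $p\ge 2$, i.e.\ $n\ge 5$; this is precisely the regime in which the block palette has room to absorb $C$ (one needs $2m\le mp$), so the shift-and-SDR extension in the previous paragraph goes through. For $n\in\{3,4\}$ the clique $C$ dominates and forces $\chi_\delta=2m$ — for $n=3$ this also follows from Theorem~\ref{thm:uvH} applied to $S_{1,m}=\{u\}\vee N_m$ — so the clean identity is really an $n\ge 5$ statement and the small cases should be isolated. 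Thus the substance of the argument is the simultaneous coloring, the lower bound being an easy clique count.
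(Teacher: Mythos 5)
Your proposal is correct and follows essentially the same route as the paper: the identical clique lower bound (every other interior position in each leaf gives $m\left\lceil\frac{n-2}{2}\right\rceil$ pairwise adjacent degree-$3$ vertices), and for $n\geq 5$ an upper bound via a block coloring in which each leaf's interior receives its own $\left\lceil\frac{n-2}{2}\right\rceil$ colors with consecutive positions sharing a color. The paper's explicit formula $c(i,j)=i+\left(\left\lfloor j/2\right\rfloor-1\right)m$ is this same construction with the roles of blocks and offsets transposed, and its treatment of the center column and of the endpoint clique $C$ corresponds to your shift and SDR steps. Your sketched extension does survive verification: with $p=\left\lceil\frac{n-2}{2}\right\rceil\geq 2$ the cyclic shift separates $(0,v_j)$ from every $(i,v_j)$, assigning $(i,v_1)$ and $(i,v_n)$ the offsets $p$ and $1$ of block $i$ colors $C$ properly, and each vertex of $A$ has at most $m+2<mp$ forbidden colors.

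Your point about small $n$ is not a gap in your argument but a genuine inconsistency in the paper. For $n\in\{3,4\}$ the stated formula $m\left\lceil\frac{n-2}{2}\right\rceil$ equals $m$, while the $2m$-clique on the leaf endpoints forces $\chi_\delta\geq 2m$; indeed the paper's own proof of these two cases (Theorem~\ref{thm:uvH} for $n=3$, Theorem~\ref{thm: DistinctDegDif} applied to $P_4\square S_{1,m}$ for $n=4$) concludes $\chi_\delta(S_{1,m}\square P_n)=2m$, contradicting the formula it is supposed to establish. So the identity as stated holds only for $n\geq 5$, and the uniform statement would be $\chi_\delta(S_{1,m}\square P_n)=m\cdot\max\left(2,\left\lceil\frac{n-2}{2}\right\rceil\right)$; your recommendation to isolate $n\in\{3,4\}$ is exactly right. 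The one detail you leave open is the upper bound $2m$ for $n=4$ (your block scheme has $p=1$ there and cannot absorb $C$); the paper closes it with Theorem~\ref{thm: DistinctDegDif}, using that the positive degree differences of $P_4$ (namely $1$) and of $S_{1,m}$ (namely $m-1\geq 2$) are distinct.
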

\begin{proof}
    Let $V(S_{1,m})=\{0,1,\dots,m\}$ where $d_{S_{1,m}}(0)=m$ and
    Let $V(P_n)=\{1,2,\dots,n\}$ where $d_{P_n}(1)=d_{P_n}(n)=1$. 
    
    When $n=3$, Theorem~\ref{thm:uvH} gives $\chi_\delta(S_{1,m}\square P_3) \leq 2m$.
    Since the set $\{(i,j) \in V((S_{1,m}\square P_3)_\delta) : 1\leq i\leq m, j=1,3\}$ forms a $2m$-clique in $(S_{1,m}\square P_3)_\delta$, we get $\chi_\delta(S_{1,m}\square P_3) = 2m$.

    When $n=4$, Theorem~\ref{thm: DistinctDegDif} with $G=P_4$ and $H=S_{1,m}$ gives $\chi_\delta(S_{1,m}\square P_4) = \chi_\delta(P_4 \square S_{1,m}) \leq 2m$.
    The set $\{(i,j)\in V((S_{1,m}\square P_4)_\delta) : 1\leq i\leq m, j=1,4\}$ forms a $2m$-clique in $(S_{1,m}\square P_4)_\delta$.
    Hence $\chi_\delta(S_{1,m}\square P_4) = 2m$.

    When $n\geq 5$, we let $k=\ceil{\frac{n-2}{2}}$. A coloring is 
    \[c(i,j) = \begin{cases}
        i + (k-1)m  & 0\leq i \leq m \text{ and } j=1, \\
        i  & 1\leq i \leq m \text{ and } j=n,\\
        i + (\floor{j/2}-1)m  & 0\leq i \leq m \text{ and }2\le j\le n-1 \text{ where }(i,j)\neq(0,2),(0,3), \\
        km & i=0 \text{ and } j=2,3,n\\
        
    \end{cases}\]
    as shown in Fig.~\ref{fig:S1m-Pn}.
    We thus have a proper $km$-coloring of $(S_{1,m}\square P_n)_\delta$. 
    In addition, the set $\{ (i,j)\in V((S_{1,m}\square P_n)_\delta) : 1\leq i\leq m\text{ and } 2\leq j\leq n-1 \text{ and } j \text{ is even} \}$ forms a clique of size $m\left\lceil \frac{n-2}{2} \right\rceil$ in $(S_{1,m}\square P_n)_\delta$.
\end{proof}

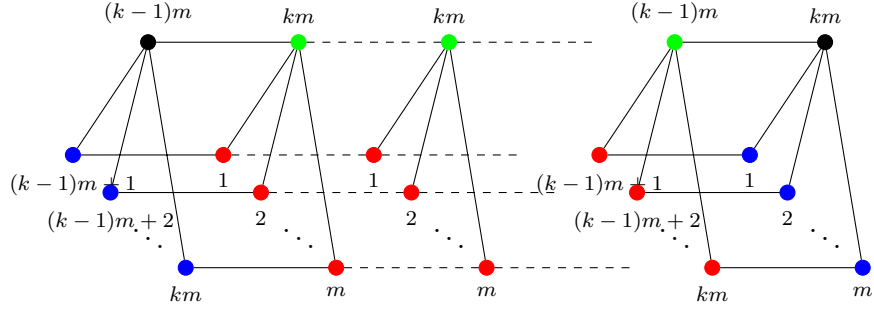
\begin{figure}
    \centering

    \begin{tikzpicture}[
        scale=0.5,
        every label/.append style={font=\tiny}
    ]
    \tikzstyle{pt}=[draw,shape=circle,inner sep=2pt,fill=white];
    \begin{scope}
        \node[pt,black] (v0) at (2,-1) [label=above:$(k-1)m$] {};
        \node[pt,blue] (v1) at (0,-4) [label=below:$(k-1)m+1$] {};
        \node[pt,blue] (v2) at (1,-5) [label=below:$(k-1)m+2$] {};
        \node at (2,-6) {$\ddots$};
        \node[pt,blue] (vm) at (3,-7) [label=below:$km$] {};
        \draw (v0) -- (v1) (v0) -- (v2) (v0) -- (vm) (v0) -- ++(4,0) (v1) -- ++(4,0) (v2) -- ++(4,0) (vm) -- ++(4,0);
    \end{scope}
    \begin{scope}[shift={(4,0)}]
        \node[pt,green] (v0) at (2,-1) [label=above:$km$] {};
        \node[pt,red] (v1) at (0,-4) [label=below:$1$] {};
        \node[pt,red] (v2) at (1,-5) [label=below:$2$] {};
        \node at (2,-6) {$\ddots$};
        \node[pt,red] (vm) at (3,-7) [label=below:$m$] {};
        \draw (v0) -- (v1) (v0) -- (v2) (v0) -- (vm);
        \draw[dashed] (v0) -- ++(4,0) (v1) -- ++(4,0) (v2) -- ++(4,0) (vm) -- ++(4,0);
    \end{scope}
    \begin{scope}[shift={(8,0)}]
        \node[pt,green] (v0) at (2,-1) [label=above:$km$] {};
        \node[pt,red] (v1) at (0,-4) [label=below:$1$] {};
        \node[pt,red] (v2) at (1,-5) [label=below:$2$] {};
        \node at (2,-6) {$\ddots$};
        \node[pt,red] (vm) at (3,-7) [label=below:$m$] {};
        \draw (v0) -- (v1) (v0) -- (v2) (v0) -- (vm);
        \draw[dashed] (v0) -- ++(4,0) (v1) -- ++(4,0) (v2) -- ++(4,0) (vm) -- ++(4,0);
    \end{scope}
    \begin{scope}[shift={(14,0)}]
        \node[pt,green] (v0) at (2,-1) [label=above:$(k-1)m$] {};
        \node[pt,red] (v1) at (0,-4) [label=below:$(k-1)m+1$] {};
        \node[pt,red] (v2) at (1,-5) [label=below:$(k-1)m+2$] {};
        \node at (2,-6) {$\ddots$};
        \node[pt,red] (vm) at (3,-7) [label=below:$km$] {};
        \draw (v0) -- (v1) (v0) -- (v2) (v0) -- (vm) (v0) -- ++(4,0) (v1) -- ++(4,0) (v2) -- ++(4,0) (vm) -- ++(4,0);
    \end{scope}
    \begin{scope}[shift={(18,0)}]
        \node[pt,black] (v0) at (2,-1) [label=above:$km$] {};
        \node[pt,blue] (v1) at (0,-4) [label=below:$1$] {};
        \node[pt,blue] (v2) at (1,-5) [label=below:$2$] {};
        \node at (2,-6) {$\ddots$};
        \node[pt,blue] (vm) at (3,-7) [label=below:$m$] {};
        \draw (v0) -- (v1) (v0) -- (v2) (v0) -- (vm);
    \end{scope}
    \end{tikzpicture}
    \caption{A proper $km$-coloring of $(S_{1,m}\square P_n)_\delta$ where $k = \ceil{\frac{n-2}{2}}$. The vertices of the same degree in $S_{1,m}\square P_n$ are indicated by the same color and are pairwise adjacent except for the pairs with a dashed line.}
    \label{fig:S1m-Pn}
\end{figure}

The following lemma is crucial for proving Theorem~\ref{thm: pathpath}.
\begin{lemma}\label{lem: bound}
For $n\geq 6$ and $k\geq 8$, we have 
\[
2\left\lceil\frac{n-2}{2}\right\rceil+2\left\lceil\frac{k-2}{2}\right\rceil+1<\left\lceil\frac{(n-2)(k-2)}{2}\right\rceil.
\]
\end{lemma}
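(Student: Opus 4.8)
The plan is to substitute $a = n-2$ and $b = k-2$, so that the hypotheses become $a \ge 4$ and $b \ge 6$ and the claimed inequality reads
\[
2\left\lceil \tfrac{a}{2}\right\rceil + 2\left\lceil \tfrac{b}{2}\right\rceil + 1 < \left\lceil \tfrac{ab}{2}\right\rceil .
\]
Since every term here is an integer, I will in fact aim to show that the gap between the two sides is at least $1$, which is equivalent to the strict inequality.

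First I would relax the ceilings by the elementary bounds $2\lceil a/2\rceil \le a+1$ and $2\lceil b/2\rceil \le b+1$ on the left, together with $\lceil ab/2\rceil \ge ab/2$ on the right. These reduce the claim to the polynomial inequality $a + b + 3 < ab/2$, equivalently $(a-2)(b-2) > 10$. Because $a$ and $b$ are integers this is the same as $(a-2)(b-2) \ge 11$, and since $(a-2)(b-2)$ is increasing in each variable on the range $a \ge 4$, $b \ge 6$ (where $a-2 \ge 2$ and $b-2 \ge 4$), this bound holds for all but finitely many pairs.

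Next I would isolate the exceptional pairs, namely those with $a \ge 4$, $b \ge 6$ and $(a-2)(b-2) \le 10$. Using $a-2 \ge 2$ and $b-2 \ge 4$, the only possibilities are $(a-2,b-2) \in \{(2,4),(2,5)\}$, that is, $(n,k) \in \{(6,8),(6,9)\}$. For these two pairs I would simply evaluate both sides: for $(n,k)=(6,8)$ the inequality reads $11 < 12$, and for $(n,k)=(6,9)$ it reads $13 < 14$, both true. This completes the verification.

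The only subtlety, and what I expect to be the main obstacle, is that the crude linear relaxation $a+b+3 < ab/2$ is \emph{false} exactly at the corner of the domain: at $(a,b)=(4,6)$ its two sides are $13$ and $12$. Thus the lemma is genuinely tight there and cannot be obtained from the relaxation alone, which is precisely because we wasted the estimate $2\lceil a/2\rceil \le a+1$ when $a$ is even. One must therefore handle the finitely many boundary pairs by hand. Equivalently, a parity-refined identity $2\lceil a/2\rceil = a + (a \bmod 2)$ would absorb these cases at the cost of a four-way split on the parities of $a$ and $b$; either route closes the proof.
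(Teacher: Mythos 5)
Your proof is correct, and it takes a genuinely different route from the paper's. The paper argues by contradiction with a three-way parity split ($n,k$ both even; one odd and one even; both odd); in each case it evaluates the ceilings exactly and derives a bound such as $n\le\frac{4k-10}{k-4}<6$ for $k\ge 8$, contradicting $n\ge 6$. You instead remove all ceilings at once with the one-sided estimates $2\lceil a/2\rceil\le a+1$ and $\lceil ab/2\rceil\ge ab/2$ (where $a=n-2$, $b=k-2$), reduce to the integer inequality $(a-2)(b-2)\ge 11$, and then check by hand the only two pairs $(n,k)\in\{(6,8),(6,9)\}$ where this relaxation is too lossy. What your route buys: no parity analysis, every estimate points in the safe direction, and it makes visible exactly where the inequality is tight (the corner $(n,k)=(6,8)$); the cost is the finite exception check, which you correctly identified as unavoidable for this relaxation and carried out. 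The paper's route keeps the ceilings exact within each parity class so that no exceptional pairs arise, at the cost of three separate algebraic manipulations; notably, the printed Cases 2 and 3 bound the supposition's right-hand side by $\frac{(n-1)(k-2)}{2}$ and $\frac{(n-1)(k-1)}{2}$ respectively, both of which exceed the true value $\left\lceil\frac{(n-2)(k-2)}{2}\right\rceil$ --- an estimate in the wrong direction for a contradiction argument (those cases can be repaired, since the corrected bounds still force $n<6$ or clash with the parity of $n$, but the slip underlines the robustness advantage of your uniformly one-sided relaxation).
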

\begin{proof}
Suppose $2\left\lceil\frac{n-2}{2}\right\rceil+2\left\lceil\frac{k-2}{2}\right\rceil+1\geq \left\lceil\frac{(n-2)(k-2)}{2}\right\rceil$.

{\case $n$ and $k$ are even.}\\
We have
\begin{align*}
    n+k-3     &\geq \frac{(n-2)(k-2)}{2},\\
    2n+2k-6 &\geq nk-2n-2k+4.
\end{align*}
Thus $n\leq \frac{4k-10}{k-4}<6$ when $k\geq 8$, which is a contradiction.

{\case $n$ is odd and $k$ is even.}\\
We have
\begin{align*}
    n+k-2     &\geq \frac{(n-1)(k-2)}{2},\\
    2n+2k-4 &\geq nk-2n-k+2.
\end{align*}
Thus $n\leq \frac{3k-6}{k-4}\leq \frac{9}{2}$, which is not possible when $k\geq 8$.
The same argument can be applied when $n$ is even and $k$ is odd.

{\case $n$ and $k$ are odd.}\\
\begin{align*}
    n+k-1     &\geq \frac{(n-1)(k-1)}{2},\\
    2n+2k-4 &\geq nk-n-k+1.
\end{align*}
Thus $n\leq \frac{3k-5}{k-3} \leq \frac{19}{5}$, which is not possible when $k\geq 8$.

Therefore $2\left\lceil\frac{n-2}{2}\right\rceil+2\left\lceil\frac{k-2}{2}\right\rceil+1<\left\lceil\frac{(n-2)(k-2)}{2}\right\rceil.$
\end{proof}

\begin{theorem}
\label{thm: pathpath}
    For $6\leq n\leq k$, we have 
    \[
\chi_{\delta}(P_{n}\square P_{k})=\left\lceil\frac{(n-2)(k-2)}{2}\right\rceil.
\]
\end{theorem}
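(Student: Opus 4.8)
\emph{Set-up and lower bound.} The plan is to classify the vertices of the grid $P_n\square P_k$ by degree and to analyze $(P_n\square P_k)_\delta$ on each class. Writing $d(i,j)=d_{P_n}(i)+d_{P_k}(j)$, the four corners have degree $2$, the $2(n-2)+2(k-2)$ non-corner boundary vertices have degree $3$, and the $(n-2)(k-2)$ interior vertices $(i,j)$ with $2\le i\le n-1$, $2\le j\le k-1$ have degree $4$. Since all interior vertices share a degree, the definition of the $\delta$-complement shows that two interior vertices are $\delta$-adjacent precisely when they are \emph{non}-adjacent in the grid; hence the interior induces the complement of the subgrid $P_{n-2}\square P_{k-2}$. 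For the lower bound I would take the larger class of the bipartition of $P_{n-2}\square P_{k-2}$ (by parity of $i+j$); it is an independent set of the subgrid of size $\left\lceil\frac{(n-2)(k-2)}{2}\right\rceil$, hence a clique of that size in $(P_n\square P_k)_\delta$. Setting $N=\left\lceil\frac{(n-2)(k-2)}{2}\right\rceil$, this yields $\chi_\delta(P_n\square P_k)\ge N$.

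\emph{Upper bound: coloring the interior and the boundary.} For the matching upper bound I would first color the interior. The complement of $P_{n-2}\square P_{k-2}$ has chromatic number equal to the clique-cover number of $P_{n-2}\square P_{k-2}$; since the subgrid is triangle-free with a matching of size $\left\lfloor\frac{(n-2)(k-2)}{2}\right\rfloor$, this number is exactly $N$, so the interior can be colored with the palette $\{1,\dots,N\}$. Next I would record the remaining $\delta$-adjacencies via Theorem~\ref{thm: cartesianEdge}: a boundary and an interior vertex have different degrees, so they are $\delta$-adjacent iff grid-adjacent, and each boundary vertex $(1,j)$ (and symmetrically on the other sides) has a \emph{unique} interior grid-neighbor $(2,j)$; corners have no interior grid-neighbor and hence no interior $\delta$-neighbor. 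Among the degree-$3$ vertices, $\delta$-adjacency is grid-non-adjacency, so within each side they induce the complement of a path, while any two vertices on different sides are $\delta$-adjacent; consequently the boundary can be properly colored by pairing consecutive vertices along each side and giving all $B:=2\left\lceil\frac{n-2}{2}\right\rceil+2\left\lceil\frac{k-2}{2}\right\rceil$ resulting classes distinct colors.

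\emph{Reconciliation via Hall and the role of Lemma~\ref{lem: bound}.} The crux is to realize the boundary coloring \emph{inside the same palette} $\{1,\dots,N\}$ already used on the interior, so that no boundary vertex collides with its unique interior neighbor. Each of the $B$ boundary classes (one or two vertices) forbids at most two interior colors, so I would model the assignment of classes to colors as a bipartite matching and check Hall's condition: any set of classes excludes at most the $\le 2$ colors forbidden by one of them, leaving at least $N-2$ admissible colors. Lemma~\ref{lem: bound} gives $N\ge B+2$, hence $N-2\ge B$ and Hall's condition holds, producing an injective conflict-free assignment of the $B$ classes into $\{1,\dots,N\}$. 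I would then color the four corners last: they form a $4$-clique, each $\delta$-adjacent only to two already-colored boundary vertices and to no interior vertex, so a greedy choice of four distinct colors avoiding at most five forbidden colors succeeds since $N$ is large. This gives a proper $N$-coloring of $(P_n\square P_k)_\delta$, so $\chi_\delta(P_n\square P_k)\le N$, and combined with the lower bound we get equality.

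\emph{Main obstacle and small cases.} The delicate point is exactly the Hall step: converting the numerical slack $N>B+1$ of Lemma~\ref{lem: bound} into a genuine simultaneous coloring that respects both the interior–boundary interface and the ``complete across sides'' structure of the boundary. One must also note that Lemma~\ref{lem: bound} is stated only for $k\ge 8$, so the residual cases $(n,k)\in\{(6,6),(6,7),(7,7)\}$ lie outside its range (indeed for $(6,6)$ one has $N=B=8$, so $N\ge B+2$ fails) and would be handled separately by a direct construction or an explicit Hall check using a concrete domino tiling of the interior.
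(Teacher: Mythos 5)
Your proposal is correct and follows essentially the same route as the paper's proof: partition the vertices by degree, use the larger parity class of the interior as the clique giving the lower bound, color the interior in grid-adjacent (domino) pairs with $N=\left\lceil\frac{(n-2)(k-2)}{2}\right\rceil$ colors, reuse that palette on paired consecutive boundary vertices with availability guaranteed by Lemma~\ref{lem: bound}, color the four corners greedily, and defer the cases $6\le k\le 7$ to direct verification exactly as the paper does. The only cosmetic difference is that you phrase the boundary-color assignment via Hall's theorem, where the paper uses an equivalent greedy count (each boundary pair has at most $B+1$ forbidden colors and $N\ge B+2$).
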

\begin{proof}
    Let $V_{d}$ be the set of vertices of degree $d$ in $P_n\square P_k$.
    The vertex set of $P_n\square P_k$ can be partitioned into $V_2, V_3$ and $V_4$.
    We note that $V(P_n\square P_k)=V((P_n\square P_k)_\delta)=V(P_n)\times V(P_k)$.
    Let $(i,j)\in V(P_n\square P_k)$ for $i=1,\dots,n$ and $j=1,\dots,k$.
    We have that $V_3=\{(i,j): i=1,n \text{ and } 2\leq j\leq k-1\}\cup\{(i,j):j=1,k \text{ and } 2\leq i\leq n-1\}$ and $V_4=\{(i,j): 2\leq i\leq n-1 \text{ and } 2\leq j\leq k-1\}$.
    Thus $|V_2|=4$, $|V_3|=2(n+k-4)$ and $|V_4|=(n-2)(k-2)$.
   The vertices $(i,j)$ and $(i',j')$ are adjacent in $P_n\square P_k$ if and only if $|i-i'|+|j-j'|=1$.
    Thus 
    \begin{itemize}
 %
        \item if $d_{P_n\square P_k}(i,j)=d_{P_n\square P_k}(i',j')$, then the vertices $(i,j)$ and $(i',j')$ are adjacent in $(P_n\square P_k)_\delta$ if and only if $|i-i'|+|j-j'|\geq 2$,
        \item if $d_{P_n\square P_k}(i,j)\neq d_{P_n\square P_k}(i',j')$, then the vertices $(i,j)$ and $(i',j')$ are adjacent in $(P_n\square P_k)_\delta$ if and only if $|i-i'|+|j-j'|=1$.
    \end{itemize}
Since each pair of vertices $(i,j),(i',j')\in V_4$ with $|i-i'|+|j-j'|\geq 2$ are adjacent, it follows that
\[
\chi_{\delta}(P_n\square P_k)\geq \omega((P_n\square P_k)_\delta)\geq \left\lceil\frac{(n-2)(k-2)}{2}\right\rceil.
\]
We note that
\[
\left\lceil\frac{(n-2)(k-2)}{2}\right\rceil=
\begin{cases} 
\frac{(n-2)(k-2)}{2} &\text{ if } n \text{ or } k \text{ is even},\\
\left\lfloor\frac{k-2}{2}\right\rfloor(n-2)+\left\lceil\frac{n-2}{2}\right\rceil &\text{ if } n \text{ and } k \text{ are odd}.
\end{cases}
\]
    We color $V_4$ by a coloring $c_0$ defined by
    \[
    c_0(i,j)=
    \begin{cases}
        (i-2)\left\lfloor\frac{k-2}{2}\right\rfloor+\left\lfloor\frac{j-2}{2}\right\rfloor &\text{ if } i=2,\dots,n-1 \text{ and } j=2,\dots,2\left\lfloor\frac{k-2}{2}\right\rfloor+1,\\
    
        (n-2)\left\lfloor\frac{k-2}{2}\right\rfloor+\left\lfloor\frac{i-2}{2}\right\rfloor &\text{ if } k \text{ is odd and } j=k-1,i=1,\dots,n-2.
    \end{cases}
    \]
The coloring $c_0$ uses $\left\lceil\frac{(n-2)(k-2)}{2}\right\rceil$ colors.
Since the coloring in $V_4$ has at most 2 vertices with the same color and they are adjacent in $P_n\square P_k$, which are not adjacent in $(P_n\square P_k)_\delta$, the coloring $c_0$ on $(P_n\square P_k)_\delta[V_4]$ is proper.
The case $6\leq k \leq 7$ can be verified.
Now, we suppose that $k\geq 8$. 
We color $V_3$ using $2\left\lceil\frac{n-2}{2}\right\rceil+2\left\lceil\frac{k-2}{2}\right\rceil$ colors.
We color the vertices $V_3$ in pair of consecutive vertices (except possibly the last one in a block) clockwise starting from location $(1,2)$ to $(2,1)$. 
Each color in $V_3$ needs to avoid at most $2\left\lceil\frac{n-2}{2}\right\rceil+2\left\lceil\frac{k-2}{2}\right\rceil -1$ colors of the other vertices in $V_3$ and two neighbors per each color in $V_4$, i.e., we have to avoid $2\left\lceil\frac{n-2}{2}\right\rceil+2\left\lceil\frac{k-2}{2}\right\rceil +1$ colors.
By Lemma \ref{lem: bound}, there is a remaining color in $V_4$ that is available to assign to the considered vertex.
Since each vertex in $V_2$ has degree 5 in $(P_n\square P_k)_\delta$ and $\left\lceil\frac{(n-2)(k-2)}{2}\right\rceil>5$, we can color $V_2$. 
This completes the proof.
\end{proof}

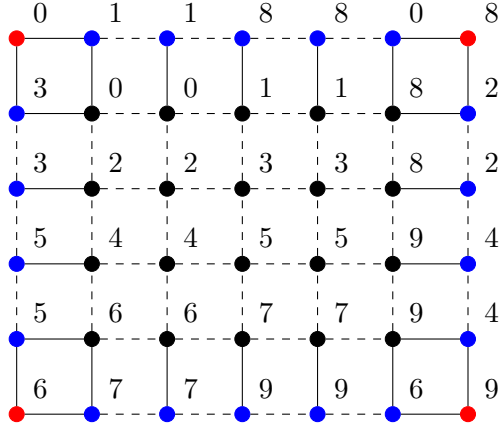
\begin{figure}
    \centering
    \begin{tikzpicture}[scale=1, every label/.append style={font=\small}]
    \tikzstyle{pt}=[draw,shape=circle,inner sep=2pt,fill=black];
    \begin{scope}[]
    	\node[pt,red] (1) at (0,0) [label=above right:6] {};
    	\node[pt,blue] (2) at (0,1) [label=above right:5] {};
    	\node[pt,blue] (3) at (0,2) [label=above right:5] {};
    	\node[pt,blue] (4) at (0,3) [label=above right:3] {};
    	\node[pt,blue] (5) at (0,4) [label=above right:3] {};
    	\node[pt,red] (6) at (0,5) [label=above right:0] {};
    	\draw (1) -- (2) (5) -- (6);
    	\draw[dashed] (2) -- (3) -- (4) -- (5);
    	\draw (1) -- +(1,0) (2) -- +(1,0) (3) -- +(1,0) (4) -- +(1,0) (5) -- +(1,0) (6) -- +(1,0);
    \end{scope}
    \begin{scope}[shift={(1,0)}]
    	\node[pt,blue] (1) at (0,0) [label=above right:7] {};
    	\node[pt] (2) at (0,1) [label=above right:6] {};
    	\node[pt] (3) at (0,2) [label=above right:4] {};
    	\node[pt] (4) at (0,3) [label=above right:2] {};
    	\node[pt] (5) at (0,4) [label=above right:0] {};
    	\node[pt,blue] (6) at (0,5) [label=above right:1] {};
    	\draw (1) -- (2) (5) -- (6);
    	\draw[dashed] (2) -- (3) -- (4) -- (5);
    	\draw[dashed] (1) -- +(1,0) (2) -- +(1,0) (3) -- +(1,0) (4) -- +(1,0) (5) -- +(1,0) (6) -- +(1,0);
    \end{scope}
    \begin{scope}[shift={(2,0)}]
    	\node[pt,blue] (1) at (0,0) [label=above right:7] {};
    	\node[pt] (2) at (0,1) [label=above right:6] {};
    	\node[pt] (3) at (0,2) [label=above right:4] {};
    	\node[pt] (4) at (0,3) [label=above right:2] {};
    	\node[pt] (5) at (0,4) [label=above right:0] {};
    	\node[pt,blue] (6) at (0,5) [label=above right:1] {};
    	\draw (1) -- (2) (5) -- (6);
    	\draw[dashed] (2) -- (3) -- (4) -- (5);
    	\draw[dashed] (1) -- +(1,0) (2) -- +(1,0) (3) -- +(1,0) (4) -- +(1,0) (5) -- +(1,0) (6) -- +(1,0);
    \end{scope}
    \begin{scope}[shift={(3,0)}]
    	\node[pt,blue] (1) at (0,0) [label=above right:9] {};
    	\node[pt] (2) at (0,1) [label=above right:7] {};
    	\node[pt] (3) at (0,2) [label=above right:5] {};
    	\node[pt] (4) at (0,3) [label=above right:3] {};
    	\node[pt] (5) at (0,4) [label=above right:1] {};
    	\node[pt,blue] (6) at (0,5) [label=above right:8] {};
    	\draw (1) -- (2) (5) -- (6);
    	\draw[dashed] (2) -- (3) -- (4) -- (5);
    	\draw[dashed] (1) -- +(1,0) (2) -- +(1,0) (3) -- +(1,0) (4) -- +(1,0) (5) -- +(1,0) (6) -- +(1,0);
    \end{scope}
    \begin{scope}[shift={(4,0)}]
    	\node[pt,blue] (1) at (0,0) [label=above right:9] {};
    	\node[pt] (2) at (0,1) [label=above right:7] {};
    	\node[pt] (3) at (0,2) [label=above right:5] {};
    	\node[pt] (4) at (0,3) [label=above right:3] {};
    	\node[pt] (5) at (0,4) [label=above right:1] {};
    	\node[pt,blue] (6) at (0,5) [label=above right:8] {};
    	\draw (1) -- (2) (5) -- (6);
    	\draw[dashed] (2) -- (3) -- (4) -- (5);
    	\draw[dashed] (1) -- +(1,0) (2) -- +(1,0) (3) -- +(1,0) (4) -- +(1,0) (5) -- +(1,0) (6) -- +(1,0);
    \end{scope}
    \begin{scope}[shift={(5,0)}]
    	\node[pt,blue] (1) at (0,0) [label=above right:6] {};
    	\node[pt] (2) at (0,1) [label=above right:9] {};
    	\node[pt] (3) at (0,2) [label=above right:9] {};
    	\node[pt] (4) at (0,3) [label=above right:8] {};
    	\node[pt] (5) at (0,4) [label=above right:8] {};
    	\node[pt,blue] (6) at (0,5) [label=above right:0] {};
    	\draw (1) -- (2) (5) -- (6);
    	\draw[dashed] (2) -- (3) -- (4) -- (5);
    	\draw (1) -- +(1,0) (2) -- +(1,0) (3) -- +(1,0) (4) -- +(1,0) (5) -- +(1,0) (6) -- +(1,0);
    \end{scope}
    \begin{scope}[shift={(6,0)}]
    	\node[pt,red] (1) at (0,0) [label=above right:9] {};
    	\node[pt,blue] (2) at (0,1) [label=above right:4] {};
    	\node[pt,blue] (3) at (0,2) [label=above right:4] {};
    	\node[pt,blue] (4) at (0,3) [label=above right:2] {};
    	\node[pt,blue] (5) at (0,4) [label=above right:2] {};
    	\node[pt,red] (6) at (0,5) [label=above right:8] {};
    	\draw (1) -- (2) (5) -- (6);
    	\draw[dashed] (2) -- (3) -- (4) -- (5);
    \end{scope}
    \end{tikzpicture}    
    \caption{A proper 10-coloring of $(P_6\square P_7)_\delta$. The vertices in $V_2$, $V_3$ and $V_4$ are shown in red, blue and black, respectively. The vertices in each $V_i$ for $i=2,3,4$ are pairwise adjacent in $(P_6\square P_7)_\delta$ except for the pairs with a dashed line.}
    \label{fig:path-path}
\end{figure}

\section{Conclusion}
We give a structure of $(G_1\square\cdots\square G_k )_\delta$ associated with $(G_1)_{\delta}\square\cdots\square (G_k)_{\delta}$ and the necessary and sufficient condition that both graphs are equal.
We also give sharp bounds on the $\delta$-chromatic number of $G\square H$ with a class of graphs achieving such bound. The $\delta$-chromatic number of the Cartesian product of several classes of well-known graphs are also given.

\section*{Acknowledgement}
The authors thank Rasimate Maungchang for his invaluable comments.
This research project was financially supported by Mahasarakham University.

\bibliography{ref}

\begin{thebibliography}{1}

\bibitem{Balakrishnan2014}
R.~Balakrishnan, S.F. Raj, and T.~Kavaskar.
\newblock $b$-chromatic number of cartesian product of some families of graphs.
\newblock {\em Graphs and Combinatorics}, page 511–520, 2014.

\bibitem{BOROWIECKI20061955}
M.~Borowiecki, S.~Jendrol’, D.~Král’, and J.~Miškuf.
\newblock List coloring of cartesian products of graphs.
\newblock {\em Discrete Mathematics}, 306(16):1955--1958, 2006.

\bibitem{BRESAR20072303}
B.~Brešar, S.~Klavžar, and D.~F. Rall.
\newblock On the packing chromatic number of cartesian products, hexagonal
  lattice, and trees.
\newblock {\em Discrete Applied Mathematics}, 155(17):2303--2311, 2007.

\bibitem{GUO201882}
C.~Guo and M.~Newman.
\newblock On the $b$-chromatic number of cartesian products.
\newblock {\em Discrete Applied Mathematics}, 239:82--93, 2018.

\bibitem{math10081203}
A.~Pai, H.~A Rao, S.~D’Souza, P.~G. Bhat, and S.~Upadhyay.
\newblock $\delta$-complement of a graph.
\newblock {\em Mathematics}, 10(8):1203, 2022.

\bibitem{sabidussi_1957}
G.~Sabidussi.
\newblock Graphs with given group and given graph-theoretical properties.
\newblock {\em Canadian Journal of Mathematics}, 9:515–525, 1957.

\bibitem{Vichitkunakorn2023}
P.~Vichitkunakorn, R.~Maungchang, and W.~Tangjai.
\newblock On nordhaus-gaddum type relations of $\delta$-complement graphs.
\newblock {\em Heliyon}, 9(6):e16630, 2023.

\end{thebibliography}
\bibliographystyle{plain}
\end{document}